\makeindex \setcounter{section}{0}
\newtheorem{thm}{Theorem}
\newtheorem{lem}[thm]{Lemma}
\newtheorem{cor}[thm]{Corollary}
\newtheorem{prop}[thm]{Proposition}
\newtheorem{defn}[thm]{Definition}
\newcommand{\GL}{\operatorname{GL}}
\newcommand{\Z}{{\mathbb Z}}
\newcommand{\z}{{\mathbb Z}}
\newcommand{\q}{{\mathbb Q}}
\newcommand{\HH}{{\mathfrak H}}
\newcommand{\C}{{\mathbb C}}
\newcommand{\f}{{\mathbb F}}
\def\q{{\mathbb Q}}
\def\z{{\mathbb Z}}
\def\C{{\mathfrak C}}
\def\f{{\mathbb F}}
\def\Pr{{\mathbb P}}
\providecommand{\myfloor}[1]{\left \lfloor #1 \right \rfloor }
\begin{document}

\title[An explicit correspondence of modular curves]
{An explicit correspondence of modular curves}

\author{Imin Chen and Parinaz Salari Sharif}

\date{December 2017}
\subjclass{Primary: 11G18, Secondary: 14G35}

\address{Imin Chen \\
Department of Mathematics \\
Simon Fraser University \\
Burnaby \\
British Columbia \\
CANADA.}

\email{ichen@sfu.ca}

\address{Parinaz Salari Sharif \\
Department of Mathematics \\
Simon Fraser University \\
Burnaby \\
British Columbia \\
CANADA.}

\email{psalaris@sfu.ca}

\thanks{Research supported by an NSERC Discovery Grant and a SFU VPR Bridging Grant.}

\begin{abstract}
In this paper, we recall an alternative proof of Merel's conjecture which
asserts that a certain explicit correspondence gives the isogeny relation
between the Jacobians associated to the normalizer of split and non-split
Cartan subgroups. This alternative proof does not require extensive
representation theory and can be formulated in terms of certain finite
geometries modulo $\ell$.

Secondly, we generalize these arguments to exhibit an explicit correspondence which gives
the isogeny relation between the Jacobians associated to split and non-split Cartan subgroups.
An interesting feature is that the required explicit correspondence is considerably more complicated but
can expressed as a certain linear combination of double coset operators whose coefficients
we are able to make explicit.
\end{abstract}

\maketitle

\section{Introduction}

Modular curves, which are coarse moduli spaces for elliptic curves with
prescribed level structure, appear in the study of Galois torsion structures
on elliptic curves.

A well-known example is Mazur's Theorem \cite{mazur} which states that there
are no rational $\ell$-isogenies between rational elliptic curves if $\ell >
163$. This result is proven by showing the modular curve $X_0(\ell)$ has no
non-cuspidal rational point if $\ell > 163$. Mazur's method is based on
descent on the Jacobian of $X_0(\ell)$, but because of the rich arithmetic
structure of these curves, the method is more powerful and efficient.

Let $\ell$ be a prime, and $\z/\ell\z = \f_\ell$ be a finite field of cardinality $\ell$.

For a subgroup $H$ of $\GL_2(\f_\ell)$ which contains $-1$, it is possible
to associate a modular curve $X_H := X/H$. In the case when $H$ is a non-split
Cartan subgroup $C'$ or its normalizer $N'$, it is relevant from the point of
view of Mazur's method to understand the Jacobian of $X_H$. In
\cite{chen-98}, it was proven using the trace formula that $X_{N'}$ and
$X_{C'}$ are related by an isogeny over $\q$ to certain quotients of the
Jacobian of the modular curves $X_0(\ell^2)$. Subsequently, 
a proof based on the representation theory of $\GL_2(\f_\ell)$ was given in \cite{edixhoven}.

In \cite{merel}, it was conjectured that the above isogeny relation between
the Jacobian of $X_{N'}$ and the Jacobian of $X_0(\ell^2)$ was given by a
certain explicit correspondence. This was proven in \cite{chen} using the
representation theory of $\GL_2(\f_\ell)$ and identities in finite double coset algebras.

In this paper, we recall an alternative proof of Merel's conjecture, which
does not require extensive representation theory, based on arguments given by
B.\ Birch and D.\ Zagier \cite{zagier}. The proof can be formulated in terms
of certain finite geometries over $\f_\ell$ and is largely elementary in
its statement and proof, though some algebraic number theory is used.

Secondly, we generalize these arguments to exhibit an explicit correspondence
which gives the isogeny relation between the Jacobians associated to split and
non-split Cartan subgroups. An interesting
feature is that the required explicit correspondence is considerably more
complicated but can be expressed as a certain linear combination of double coset
operators whose coefficients we are able to make explicit.

The precise statements of the theorems we prove are as follows.
\begin{itemize}
\item Let $\ell$ be an odd prime and $\epsilon$ a non-square in
$\f_\ell^\times$.

\item Let $G = \GL_2(\f_\ell)$.

\item Let $\Pr^1(\f_\ell) \times \Pr^1(\f_\ell) - \Delta$ denote the set
of ordered pairs $(a,b)$ of distinct points in $\Pr^1(\f_\ell)$.

\item Let $(\Pr^1(\f_\ell) \times \Pr^1(\f_\ell) -\Delta)/\sim$, where $(a,b) \sim (b,a)$,  denote the set of unordered pairs
$\left\{a,b\right\}$ of distinct points in $\Pr^1(\f_\ell)$.

\item Let $\C_\ell = \left\{ x + y \sqrt{\epsilon} : x \in \f_\ell, y \in
\f_\ell^\times \right\}$.

\item Let $\HH_\ell = \C_\ell/\sim$, where $x + y \sqrt{\epsilon} \sim x -
y \sqrt{\epsilon}$.

\item When $S$ is a set, we denote by $\q[S]$ the free $\q$-vector space generated by the set $S$.

\item For convenience, we write column vectors in the form $(x,y)^t$ for instance.
\end{itemize}

Given an unordered pair $\left\{ a, b \right\}$ in
$(\Pr^1(\f_\ell) \times \Pr^1(\f_\ell) -\Delta)/\sim$, we define in~\eqref{geodesic} a `geodesic'
$\gamma_{\left\{a, b \right\}}$ in $\HH_\ell$ between $a$ and $b$. 
\begin{thm}
\label{main-N} The map
\begin{align*}
\label{main-Neq}\psi^+ : \q[(\Pr^1(\f_\ell) \times \Pr^1(\f_\ell) - \Delta)/\sim] & \rightarrow \q[\HH_\ell] \\
\notag \left\{ a,b \right\} & \mapsto \sum_{x \in \gamma_{\left\{a,b\right\}}} x
\end{align*}

is a surjective $\q[G]$-module homomorphism.
\end{thm}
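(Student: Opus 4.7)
The strategy is to separate the two claims of Theorem~\ref{main-N}.

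\textbf{Equivariance.} Both sides are permutation $\q[G]$-modules in a natural way: $\q[(\Pr^1(\f_\ell) \times \Pr^1(\f_\ell) - \Delta)/\sim] \cong \Ind_N^G \mathbf{1}$ where $N$ is the normalizer of a split Cartan (the stabilizer of an unordered pair like $\{0, \infty\}$), and $\q[\HH_\ell] \cong \Ind_{N'}^G \mathbf{1}$ where $N'$ is the normalizer of a non-split Cartan (the stabilizer of a conjugate pair $\{z_0, \bar z_0\}$). The plan is to check from the definition of $\gamma_{\{a,b\}}$ in \eqref{geodesic} that the geodesic construction is $\GL_2(\f_\ell)$-equivariant, i.e.\ $g \cdot \gamma_{\{a,b\}} = \gamma_{\{g a, g b\}}$ for every $g \in G$. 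One expects the defining condition on $z \in \gamma_{\{a,b\}}$ to be $G$-invariant (for instance, a cross-ratio in $\f_\ell$, or membership in an orbit of the pointwise stabilizer of $\{a,b\}$ acting on $\HH_\ell$). Granted this, the $\q[G]$-equivariance of $\psi^+$ is immediate by reindexing the defining sum.

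\textbf{Reduction of surjectivity.} Since $G$ acts transitively on $\HH_\ell$, it suffices to show that some single element $z_0 \in \HH_\ell$ lies in $\im(\psi^+)$; its $G$-translates then span $\q[\HH_\ell]$. Equivalently, by linear duality with respect to the given bases, surjectivity of $\psi^+$ is equivalent to injectivity of the incidence map
\[
\phi : \q[\HH_\ell] \longrightarrow \q[(\Pr^1(\f_\ell) \times \Pr^1(\f_\ell) - \Delta)/\sim], \qquad \phi(z) = \sum_{\{a,b\}:\, z \in \gamma_{\{a,b\}}} \{a,b\},
\]
which is itself a $\q[G]$-module map. The plan is to verify $\phi$ is injective, i.e.\ that any function $c : \HH_\ell \to \q$ with $\sum_{z \in \gamma_{\{a,b\}}} c(z) = 0$ for every pair $\{a,b\}$ must vanish identically.

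\textbf{Reduction to a finite incidence matrix.} The composition $T := \psi^+ \circ \phi$ is a $\q[G]$-endomorphism of $\q[\HH_\ell]$ and, by transitivity, takes the form $T(z) = \sum_{w \in \HH_\ell} N(z,w)\, w$ where $N(z,w)$ is the number of geodesics containing both $z$ and $w$ and depends only on the $G$-orbit of $(z,w)$. Enumerating these orbits (one expects only a handful: $w=z$, $w=\bar z$, and a small number of Möbius-invariant types for $w \ne z, \bar z$) reduces the problem to checking that a small explicit matrix indexed by the orbits is nonsingular; from any such orbit relation expressing $z_0$ in the image of $T \subseteq \im(\psi^+)$, surjectivity follows.

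\textbf{Main obstacle.} The crux is the finite-geometry calculation of Step~3: counting, for each orbit-type of pairs $(z,w)$, the number of geodesics joining two $\f_\ell$-points of $\Pr^1$ that contain both $z$ and $w$, and showing the resulting incidence matrix has full rank. This is precisely the combinatorial content of the Birch--Zagier argument referenced in the introduction, and it is where the bulk of the proof's effort must go; once it is in hand, both equivariance (Step~1) and the reduction of surjectivity (Step~2) are routine.
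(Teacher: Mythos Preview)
Your equivariance step is fine and matches the paper: there $\psi^+$ is identified with the double coset operator $NN'$ (Lemma~\ref{doublecosetoperatorN}), which is exactly the content of your claim $g\cdot\gamma_{\{a,b\}}=\gamma_{\{ga,gb\}}$.

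For surjectivity the two arguments diverge. The paper does \emph{not} compose with the adjoint. Instead it restricts $\psi^+$ to the subset $A_+=(\f_\ell\times\f_\ell-\Delta)/\!\sim$ of pairs of \emph{finite} points; since $|A_+|=\ell(\ell-1)/2=|\HH_\ell|$, this restriction is a square matrix and it suffices to show it is invertible. Concretely, the paper changes coordinates (Lemmas~\ref{bijectN}--\ref{lemM}) so that the geodesic equation becomes $(T-t)^2\equiv m+M\pmod\ell$; the resulting matrix is a block matrix whose $\ell\times\ell$ blocks are circulant in $t,T$. Reducing modulo a prime above $\ell$ in $\q(\zeta_\ell)$ collapses each block to a scalar, leaving an $(\ell-1)/2\times(\ell-1)/2$ circulant matrix in $m,M$ (Lemma~\ref{equationmatrix}). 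Its eigenvalues are then computed \emph{explicitly}, using the conic parametrization of Lemma~\ref{geodesicircle}, as central binomial coefficients $\binom{2k'}{k'}(-\epsilon)^{k'}$ modulo $\ell$, which are visibly nonzero since $2k'<\ell$.

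Your Gram-matrix route $T=\psi^+\phi$ is valid in principle, but your expectation that there are ``only a handful'' of $G$-orbits on $\HH_\ell\times\HH_\ell$ is wrong: the number of such orbits equals $|N'\backslash G/N'|=\dim\End_G(\q[\HH_\ell])$, which grows linearly in $\ell$ (it is on the order of $(\ell-1)/2$, the same size as the paper's circulant). So your ``small explicit matrix'' is not small, and you have given no mechanism for computing the eigenvalue of $T$ on each isotypic component short of invoking the character table of $\GL_2(\f_\ell)$---precisely the representation theory the paper advertises avoiding. The missing ingredient is exactly what the paper supplies: an explicit coordinate system in which the incidence matrix becomes circulant and its eigenvalues reduce to an elementary binomial-coefficient identity modulo $\ell$.
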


Given an ordered pair $(a,b)$ in $\Pr^1(\f_\ell) \times
\Pr^1(\f_\ell) - \Delta$ and a parameter $s \in \f_\ell^\times$, we define in~\eqref{path} a `path' $\gamma^s _{(a,b)}$ in
$\C_\ell$ from $a$ to $b$. 
\begin{thm}
\label{main-C} The map
\begin{align*}
\psi : \q[\Pr^1(\f_\ell) \times \Pr^1(\f_\ell) - \Delta] & \rightarrow \q[\C_\ell] \\
(a,b) & \mapsto \sum_{s=1}^{\ell-1}(\alpha_s + \beta_s) \sum_{x \in \gamma^s_{(a,b)}} x
\end{align*}
is a surjective $\q[G]$-module homomorphism, where $0 \le \alpha_s, \beta_s \le \ell-1$ are integers satisfying $\alpha_s \equiv 1\pod \ell$ and $\beta_s \equiv s^{-1} \pod \ell$ for $s \in \left\{ 1, \ldots, \ell-1 \right\}$.
\end{thm}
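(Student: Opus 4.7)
The plan is to follow the structure of the Birch--Zagier style proof of Theorem~\ref{main-N} and adapt the argument to the ordered setting. Here the left-hand side is identified with $\q[G/C]$, with $C$ the split Cartan stabilizing the ordered pair $(0, \infty)$, and the right-hand side with $\q[G/C']$, with $C'$ the non-split Cartan stabilizing $\sqrt{\epsilon}$. The proof splits into two independent verifications: first, that $\psi$ is $\q[G]$-equivariant, and second, that $\psi$ is surjective.

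For $\q[G]$-equivariance, I would check the intertwining property on a convenient set of generators of $G$: the Borel subgroup of upper triangular matrices, the involution $w = \begin{pmatrix} 0 & 1 \\ 1 & 0 \end{pmatrix}$, and the scalar matrices. The essential input is the transformation rule for the path $\gamma^s_{(a,b)}$ defined in~\eqref{path}, which should take the form $g \cdot \gamma^s_{(a,b)} = \gamma^{\tau(g,s)}_{g \cdot (a,b)}$ for an explicit reindexing function $\tau$. Equivariance then reduces to the functional equation $\alpha_{\tau(g,s)} + \beta_{\tau(g,s)} \equiv \alpha_s + \beta_s \pmod \ell$ for each $g$ and $s$, so that after reindexing $s \mapsto \tau(g,s)$ the weights cancel appropriately. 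For upper triangular $g$ acting by $z \mapsto (az + b)/d$, the parameter $s$ is expected to rescale by a factor in $\f_\ell^\times$, and the choice $\alpha_s \equiv 1$, $\beta_s \equiv s^{-1} \pmod \ell$ ensures that the total weight $1 + s^{-1}$ respects this rescaling modulo $\ell$. The involution $w$ interchanges $0$ and $\infty$ and should send $s \mapsto s^{-1}$, for which the identity $1 + s^{-1} \equiv s^{-1}(1 + s) \pmod \ell$ plays the analogous balancing role.

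For surjectivity, I would use the $\q[G]$-equivariance to observe that the image of $\psi$ is automatically a $\q[G]$-submodule of $\q[\C_\ell]$. Since $\C_\ell \cong G/C'$ is a transitive $G$-set, it suffices to exhibit a single point $z_0 \in \C_\ell$ inside the image. I would compute $\psi(0, \infty)$ explicitly, and then apply carefully chosen elements of $G$ that shift the parameter $s$ through $\f_\ell^\times$. Because the weights $\alpha_s + \beta_s$ depend non-trivially on $s$, a suitable $\q$-linear combination of translates $g \cdot \psi(0, \infty)$ should isolate a single point of $\C_\ell$ via a Vandermonde-like inversion argument.

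The main obstacle, and the step that diverges most from Theorem~\ref{main-N}, is the equivariance check. The unordered geodesic is intrinsic to $\{a,b\}$ and has no parameter, whereas the ordered path $\gamma^s$ carries orientation-dependent data which must transform consistently under $G$. Verifying that the specific integer lifts $0 \le \alpha_s, \beta_s \le \ell - 1$, rather than arbitrary representatives of their residue classes, produce a genuinely $\q$-linear and not merely $\f_\ell$-linear $G$-homomorphism is the most delicate point, and this is precisely the reason for writing the weight as the sum $\alpha_s + \beta_s$ with its two distinct congruence conditions.
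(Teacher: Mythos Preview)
Your proposal misidentifies where the content of the theorem lies. Equivariance is not delicate at all: by Lemma~\ref{doublecosetoperatorC} each individual $H_s$ is a double coset operator $C\begin{pmatrix}1&s\\0&1\end{pmatrix}C'$, hence already a $\z[G]$-module homomorphism on its own. Any integer linear combination $\sum_s c_s H_s$ is therefore $\q[G]$-equivariant for \emph{arbitrary} integers $c_s$; no functional equation on the weights is required, and in fact the action of $g\in G$ sends $\gamma^s_{(a,b)}$ to $\gamma^s_{(g(a),g(b))}$ with the \emph{same} $s$, so your proposed reindexing $\tau(g,s)$ is the identity. In particular, the claim that the specific lifts $0\le\alpha_s,\beta_s\le\ell-1$ are needed to upgrade an $\f_\ell$-linear map to a $\q$-linear one is simply false: equivariance holds over $\z$ for free.

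The congruence conditions $\alpha_s\equiv 1$, $\beta_s\equiv s^{-1}\pmod\ell$ are there solely to force \emph{surjectivity}, and this is the entire substance of the proof. Your surjectivity sketch does not work: since $G$ does not shift the parameter $s$, translates $g\cdot\psi(0,\infty)$ give you no Vandermonde system in $s$ to invert. The paper instead restricts $\psi$ to $\q[\f_\ell\times\f_\ell-\Delta]$ (same dimension as $\q[\C_\ell]$), changes to coordinates $(t,t')=(a+b,a-b)$ and $(T,T')=(z+\bar z,z-\bar z)$, and shows the resulting matrix is block-circulant with $\ell\times\ell$ blocks indexed by $(t',T')$. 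After reducing modulo a prime above $\ell$ (using $\q[D]\cong\q(\zeta_\ell)$), the blocks collapse to scalars and one is left with an $(\ell-1)\times(\ell-1)$ circulant matrix over $\f_\ell$. Its eigenvalues are computed explicitly as $\sum_s(\alpha_s+\beta_s)\sum_\lambda\bigl(\lambda/((\lambda s+1)^2-\epsilon\lambda^2)\bigr)^k$, and the specific choice $\alpha_s\equiv1$, $\beta_s\equiv s^{-1}$ is engineered so that these are nonzero modulo $\ell$ for every $k$ (the $\alpha$-part handles even $k$, the $\beta$-part handles odd $k$). This is where the arithmetic content lives, and your proposal does not engage with it.
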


We explain in section~\ref{lastchapter} how Theorems~\ref{main-N} and \ref{main-C}
imply relations between the Jacobians of $X_{N'}$ and $X_{C'}$ and quotients of the Jacobians of the more standard modular curve $X_0(\ell^2)$.

\section*{Acknowledgments}

We would like to thank B.\ Birch and D.\ Zagier for explaining to us their alternative proof of Merel's conjecture.

\section{Double coset operators}

\begin{lem}
\label{doublecoset}
Let $G$ be a group, $H$ and $K$ be subgroups of $G$, then
\begin{align*}
& HgK = \bigcup_{\alpha \in H/H \cap g K g^{-1}} \alpha g K,
\end{align*}
where the union is disjoint. We call $[H : H \cap g K g^{-1}]$ the degree of $HgK$. This is independent of the choice of $g$ in the sense that $deg(HgK)= deg(Hg'K)$ if $HgK=Hg'K$.
\end{lem}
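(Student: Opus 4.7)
The plan is to verify the claimed decomposition by a direct coset argument and then establish the independence of the degree by exhibiting an explicit bijection coming from conjugation.

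First, I would prove the set equality. The inclusion $\bigcup_{\alpha} \alpha g K \subseteq HgK$ is immediate since each $\alpha \in H$. Conversely, any element of $HgK$ has the form $hgk$, and this lies in $h \cdot gK$, so taking the representative $\alpha = h$ shows the reverse inclusion. Next, to check disjointness of the union as $\alpha$ ranges over coset representatives of $H/(H \cap gKg^{-1})$, I would show that for $\alpha_1, \alpha_2 \in H$,
\[
\alpha_1 g K = \alpha_2 g K \iff \alpha_2^{-1} \alpha_1 g \in g K \iff g^{-1} \alpha_2^{-1} \alpha_1 g \in K \iff \alpha_2^{-1} \alpha_1 \in H \cap gKg^{-1},
\]
the last equivalence using that $\alpha_2^{-1} \alpha_1 \in H$ automatically. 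Hence distinct cosets of $H \cap gKg^{-1}$ in $H$ yield distinct cosets $\alpha g K$, and the union is indeed disjoint when indexed by $H/(H \cap gKg^{-1})$.

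For the independence statement, suppose $HgK = Hg'K$, so $g' = hgk$ for some $h \in H$ and $k \in K$. Then
\[
g' K (g')^{-1} = hg k K k^{-1} g^{-1} h^{-1} = h (gKg^{-1}) h^{-1},
\]
and intersecting with $H$ (which is stable under conjugation by $h$) gives
\[
H \cap g' K (g')^{-1} = h \bigl( H \cap g K g^{-1} \bigr) h^{-1}.
\]
Conjugation by $h$ therefore induces a bijection between $H/(H \cap gKg^{-1})$ and $H/(H \cap g'K(g')^{-1})$, so the two indices coincide.

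No step here is a genuine obstacle: everything reduces to manipulating the defining conditions for coset equality. The only mild care needed is to phrase the independence proof so that it works regardless of whether the groups are finite, by framing it as an explicit bijection of coset spaces rather than as an equality of cardinalities.
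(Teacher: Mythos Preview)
Your argument is correct and is the standard elementary proof of this fact. The paper itself states Lemma~\ref{doublecoset} without proof, treating it as a well-known result, so there is nothing further to compare.
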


\begin{defn} 
Let $G$ be a finite group with subgroups $H$ and $K$. Given a double coset $H g K$ and a decomposition into disjoint cosets
\begin{equation*}
   H g K = \bigcup_{\alpha \in \Omega} \alpha g K,
\end{equation*}
we obtain a $\z[G]$-module homomorphism $\sigma = \sigma(H g K)$ given by
\begin{align}
\label{doublecoset-defn}
\sigma  : \z[G/H] & \rightarrow \z[G/K] \\
\notag   x H & \mapsto \sum_{\alpha \in \Omega} x \alpha g K.   
\end{align}
The $\z[G]$-module homomorphism $\sigma$ is called a double coset operator.
\end{defn}
 
Let $C$ (resp.\ $C'$) be the split (resp.\ non-split) Cartan subgroup of $G$ given respectively by
\begin{align*}
 & C= \left\{ \begin{pmatrix} \eta & 0 \\ 0 & \beta \end{pmatrix} : \eta , \beta \in \f_\ell ^{\times} \right\}, \\
 & C' = \left\{ \begin{pmatrix} x & \epsilon y \\ y & x \end{pmatrix} : (x, y) \neq (0, 0), x, y \in \f_\ell \right\}.
\end{align*}

Let $N$ (resp.\ $N'$) be the normalizer in $G$ of $C$ (resp. $C'$) which is given respectively by
\begin{align*}
 & N= \left\{ \begin{pmatrix} \eta & 0 \\ 0 & \beta \end{pmatrix}, \begin{pmatrix} 0 & \eta \\ \beta & 0 \end{pmatrix}  : \eta , \beta \in \f_\ell ^{\times} \right\}, \\
 & N' = \left\{ \begin{pmatrix} x & \epsilon y \\ y & x \end{pmatrix}, \begin{pmatrix} x & - \epsilon y \\ y & - x \end{pmatrix}  : (x, y) \neq (0, 0), x, y \in \f_\ell \right\}.
\end{align*}

\begin{lem}
\label{doublecosetoperatorN}
  The double coset operator $NN': \Z[G/N] \rightarrow \Z[G/N']$ coincides with the map $\psi^+:\Z[(\Pr^1(\f_\ell) \times \Pr^1(\f_\ell) - \Delta)/\sim] \rightarrow \Z[\HH_\ell]$ in~\eqref{geodesic} and is
  hence a $\z[G]$-module homomorphism.
\end{lem}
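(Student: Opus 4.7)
My plan is to identify both $G$-sets with their geometric counterparts, verify agreement on a single generator, and extend via $G$-equivariance. I would first identify $G/N \cong (\Pr^1(\f_\ell) \times \Pr^1(\f_\ell) - \Delta)/\sim$ via $gN \mapsto \{g \cdot 0, g \cdot \infty\}$: this is a well-defined $G$-equivariant bijection because $N$ is precisely the setwise stabilizer of $\{0, \infty\}$ (the diagonal part fixes both points while the antidiagonal part swaps them) and $G$ acts transitively on unordered pairs of distinct points of $\Pr^1(\f_\ell)$. Analogously, embedding $G \hookrightarrow \GL_2(\f_{\ell^2})$ via $\f_{\ell^2} = \f_\ell(\sqrt{\epsilon})$ and letting $G$ act on $\Pr^1(\f_{\ell^2})$, the Cartan $C'$ fixes $\pm\sqrt{\epsilon}$ pointwise while the extra generator of $N'$ swaps them, so $gN' \mapsto [g \cdot \sqrt{\epsilon}]$ gives a $G$-equivariant bijection $G/N' \cong \HH_\ell$. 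One should verify along the way that $g \cdot \sqrt{\epsilon} \in \C_\ell$, which follows from $\epsilon$ being a non-square in $\f_\ell^\times$.

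Second, by Lemma~\ref{doublecoset} the operator $\sigma(NN')$ sends the identity coset to $\sum_{\alpha \in N/(N \cap N')} \alpha N'$, so I would compute $N \cap N'$ explicitly. A direct case analysis shows that $N \cap N'$ consists of the scalar matrices, the diagonals $\operatorname{diag}(x, -x)$ for $x \in \f_\ell^\times$, together with the antidiagonals $\begin{pmatrix} 0 & \pm\epsilon\beta \\ \beta & 0 \end{pmatrix}$ for $\beta \in \f_\ell^\times$, giving $|N \cap N'| = 4(\ell-1)$ and hence $[N : N \cap N'] = (\ell-1)/2$. Taking coset representatives $\alpha_t = \operatorname{diag}(t, 1)$ parameterized by $t \in \f_\ell^\times/\{\pm 1\}$ (easily checked to be distinct modulo $N \cap N'$), I compute $\alpha_t \cdot \sqrt{\epsilon} = t \sqrt{\epsilon}$, so the image of the identity coset under $\sigma(NN')$ is $\sum_{t \in \f_\ell^\times/\{\pm 1\}} [t \sqrt{\epsilon}]$ in $\Z[\HH_\ell]$.

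Third, I would match this against $\psi^+(\{0, \infty\})$: by the definition~\eqref{geodesic}, the geodesic $\gamma_{\{0, \infty\}}$ is precisely the ``imaginary axis'' $\{[t \sqrt{\epsilon}] : t \in \f_\ell^\times/\{\pm 1\}\}$, so the two images agree on the identity coset. To conclude that $\sigma(NN') = \psi^+$ on all of $\Z[G/N]$, I would invoke the automatic $\Z[G]$-equivariance of $\sigma(NN')$ together with the $G$-equivariance of the geodesic construction, namely $g \cdot \gamma_{\{a, b\}} = \gamma_{\{g \cdot a, g \cdot b\}}$; since $G$ acts transitively on unordered pairs, agreement on the generator $\{0, \infty\}$ then propagates to every generator of $\Z[G/N]$. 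The main obstacle I anticipate is the third-step matching, which depends on unpacking the forward-referenced definition~\eqref{geodesic}; once the geodesic is seen to coincide with the imaginary axis, the remainder of the argument is essentially group-theoretic bookkeeping.
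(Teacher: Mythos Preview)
Your proposal is correct and follows essentially the same approach as the paper: both compute $N\cap N'$ explicitly (obtaining the same four families of matrices), extract the coset representatives $\operatorname{diag}(t,1)$ with $t\in\f_\ell^\times/\{\pm1\}$, and read off that the double coset operator sends the base coset to the imaginary axis $\gamma_{\{0,\infty\}}$. The paper's proof is terser, leaving the $G$-set identifications $G/N\cong(\Pr^1\times\Pr^1-\Delta)/\!\sim$ and $G/N'\cong\HH_\ell$ implicit and simply asserting the final matching, whereas you spell these out and make the propagation by $G$-equivariance explicit; but the substance is the same.
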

\begin{proof}
Since
\begin{equation*}
  N \cap N' = \left\{ \begin{pmatrix} \alpha & 0 \\ 0 & \pm \alpha
\end{pmatrix} : \alpha \in \f_\ell^\times \right\} \cup \left\{  \begin{pmatrix} 0 & \pm \epsilon \alpha \\ \alpha &
0 \end{pmatrix} : \alpha \in \f_\ell^\times \right\},
\end{equation*}
we have from Lemma~\ref{doublecoset} that
\begin{equation*}
  N N' = \cup_{\alpha \in \f_\ell^\times/\left\{ \pm 1 \right\}} \begin{pmatrix} \alpha & 0 \\ 0 & 1 \end{pmatrix}
  N'.
\end{equation*}
The $\z[G]$-module homomorphism from $\z[G/N] \rightarrow \z[G/N']$
induced by $NN'$ from \eqref{doublecoset-defn} is then seen to
be the map $\psi^+$.
\end{proof}

\begin{lem}
\label{doublecosetoperatorC}
The double coset operator $C \begin{pmatrix} 1 & s \\ 0 & 1 \end{pmatrix} C':\Z[G/C] \rightarrow \Z[G/C']$ coincides with the map $H_s:\Z[\Pr^1(\f_\ell) \times \Pr^1(\f_\ell) - \Delta] \rightarrow \Z[\C_\ell]$ in~\eqref{path} and is hence a $\z[G]$-module homomorphism.
\end{lem}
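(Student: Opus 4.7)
The plan is to adapt the proof of Lemma~\ref{doublecosetoperatorN} to the split/non-split Cartan pair. Two $G$-equivariant identifications must be set up. First, $G/C \cong \Pr^1(\f_\ell) \times \Pr^1(\f_\ell) - \Delta$, via $gC \mapsto (g\cdot \infty, g\cdot 0)$, since $C$ is the stabilizer of the ordered pair $(\infty, 0)$ under the Möbius action on $\Pr^1(\f_\ell)$. Second, $G/C' \cong \C_\ell$, via $gC' \mapsto g \cdot \sqrt{\epsilon}$. For the second identification one checks directly that $\begin{pmatrix} x & \epsilon y \\ y & x \end{pmatrix} \cdot \sqrt{\epsilon} = \sqrt{\epsilon}$ (after rationalizing $(x\sqrt{\epsilon} + \epsilon y)/(y\sqrt{\epsilon} + x)$), so $C' \subseteq \Stab(\sqrt{\epsilon})$; equality and the identification of the orbit with $\C_\ell$ follow from the index count $[G:C'] = \ell(\ell-1) = |\C_\ell|$.

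Setting $g_s = \begin{pmatrix} 1 & s \\ 0 & 1 \end{pmatrix}$, the next step is to compute $C \cap g_s C' g_s^{-1}$. Direct multiplication shows that conjugation of $\begin{pmatrix} x & \epsilon y \\ y & x \end{pmatrix}$ by $g_s$ yields $\begin{pmatrix} x+sy & y(\epsilon - s^2) \\ y & x-sy \end{pmatrix}$. For this to lie in the diagonal group $C$ one needs $y = 0$, which then forces the matrix to be a scalar (note that $\epsilon - s^2 \neq 0$ since $\epsilon$ is a non-square, but even without that $y=0$ alone kills both off-diagonal entries). Hence $C \cap g_s C' g_s^{-1} = \{\alpha I : \alpha \in \f_\ell^\times\}$, of index $\ell - 1$ in $C$. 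Taking coset representatives $\begin{pmatrix} t & 0 \\ 0 & 1 \end{pmatrix}$ for $t \in \f_\ell^\times$, Lemma~\ref{doublecoset} gives
\begin{equation*}
C g_s C' = \bigcup_{t \in \f_\ell^\times} \begin{pmatrix} t & ts \\ 0 & 1 \end{pmatrix} C'.
\end{equation*}

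Finally, the induced map $\z[G/C] \to \z[G/C']$ sends the identity coset $C$ to $\sum_{t \in \f_\ell^\times} \begin{pmatrix} t & ts \\ 0 & 1 \end{pmatrix} \cdot \sqrt{\epsilon} = \sum_{t \in \f_\ell^\times}(ts + t\sqrt{\epsilon}) \in \q[\C_\ell]$. Under our identification the class $C$ corresponds to $(\infty, 0)$, so by $G$-equivariance the operator sends $(a,b) = g(\infty, 0)$ to $g \cdot \sum_t (ts + t\sqrt{\epsilon})$. Comparing this with~\eqref{path}, which should produce the path $\gamma^s_{(a,b)}$ as the $G$-translate of the model collection $\{ts + t\sqrt{\epsilon}:t\in\f_\ell^\times\}$ from $\infty$ to $0$, we obtain agreement with $H_s$. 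The main technical point requiring care is confirming that the definition in~\eqref{path} is in fact the $G$-equivariant extension of this model path (i.e.\ independent of the choice of $g$ with $g(\infty,0)=(a,b)$, modulo $C$); this should be immediate from how the path is set up, but is the only piece that is not pure transcription of the argument for Lemma~\ref{doublecosetoperatorN}.
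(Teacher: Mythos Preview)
Your argument is the same as the paper's: compute $C \cap g_s C' g_s^{-1}$ to be the scalar matrices, apply Lemma~\ref{doublecoset} to get $C g_s C' = \bigcup_{t \in \f_\ell^\times} \begin{pmatrix} t & ts \\ 0 & 1 \end{pmatrix} C'$, and identify the resulting operator with $H_s$; the paper simply asserts this last identification while you spell out the bijections $G/C \cong \Pr^1(\f_\ell)^2 - \Delta$ and $G/C' \cong \C_\ell$ and evaluate on the identity coset. One small correction on the point you yourself flagged: the paper's convention (see the paragraph before Lemma~\ref{g-matrix} and the definition of $\gamma^s_{(a,b)}$) is $(a,b) = (g\cdot 0,\, g\cdot\infty)$, so the identification should be $gC \mapsto (g\cdot 0, g\cdot\infty)$ rather than $(g\cdot\infty, g\cdot 0)$; with that swap the identity coset corresponds to $(0,\infty)$ and your computed image $\sum_{t}(ts + t\sqrt{\epsilon})$ matches $H_s(0,\infty) = \sum_{x \in \gamma^s_{(0,\infty)}} x$ on the nose.
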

\begin{proof}
For $g = \begin{pmatrix} 1 & s \\ 0 & 1 \end{pmatrix}$, we have that
\begin{equation*}
  C \cap g C' g^{-1} = \left\{ \begin{pmatrix} \alpha & 0 \\ 0 & \alpha
\end{pmatrix} : \alpha \in \f_\ell^\times \right\}.
\end{equation*}
Thus, from Lemma~\ref{doublecoset}, we have that
\begin{equation*}
  C \begin{pmatrix} 1 & s \\ 0 & 1 \end{pmatrix} C' = \cup_{\alpha \in \f_\ell^\times} \begin{pmatrix} \alpha & \alpha s \\ 0 & 1 \end{pmatrix}
  C'.
\end{equation*}
The $\z[G]$-module homomorphism from $\z[G/C] \rightarrow \z[G/C']$
induced by $CC'$ from \eqref{doublecoset-defn} is then seen to
be the map $H_s$.
\end{proof}

\section{Normalizer of Cartan subgroup case}

\label{sectionN}

In this section, we explain and give a detailed proof of Merel's conjecture for normalizers of Cartan subgroups using methods in \cite{zagier}. In this situation, the conjectural explicit intertwining operator is given by a single double coset operator.

Define $\gamma_{\left\{ 0,\infty \right\}} := \f_\ell^\times
\sqrt{\epsilon} \subseteq \HH_\ell$, which can be thought of as the
geodesic in $\HH_\ell$ between $0$ and $\infty$. Given an unordered
pair $\left\{ a,b \right\}$, there is a $g \in G$ such that $\left\{
a , b \right\} = \left\{ g(0), g(\infty) \right\}$, which is unique
up to multiplication on the left by $N$. Thus, we may define
\begin{equation}
\label{geodesic}
  \gamma_{\left\{a, b\right\}} :=
g(\gamma_{\left\{0,\infty\right\}}),
\end{equation}
 which can be thought of as the
geodesic in $\HH_\ell$ between $a$ and $b$.
\begin{lem}
\label{g-matrix}
A choice for the element $g$ above is given by $$\begin{pmatrix}
 b & a \\
 1 & 1
\end{pmatrix}.$$
\end{lem}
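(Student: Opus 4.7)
The plan is to verify directly that the matrix $g := \begin{pmatrix} b & a \\ 1 & 1 \end{pmatrix}$ lies in $G = \GL_2(\f_\ell)$ and satisfies $\{g(0), g(\infty)\} = \{a,b\}$ under the standard M\"obius action of $\GL_2(\f_\ell)$ on $\Pr^1(\f_\ell)$, namely $\begin{pmatrix} p & q \\ r & s \end{pmatrix} \cdot z = \frac{pz+q}{rz+s}$. Any such $g$ is a valid choice because the paragraph preceding the lemma states that the desired $g$ is determined only up to left multiplication by $N = \Stab_G(\{0,\infty\})$.

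First, I would compute the determinant $\det(g) = b - a$. Since $a \ne b$ as elements of $\Pr^1(\f_\ell)$, this is nonzero, so $g \in G$. Next, a direct calculation gives
\begin{equation*}
g(0) = \frac{b \cdot 0 + a}{1 \cdot 0 + 1} = a, \qquad g(\infty) = \frac{b}{1} = b,
\end{equation*}
so $\{g(0), g(\infty)\} = \{a,b\}$ as unordered pairs, which is exactly what is required.

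The only mild subtlety is when $a$ or $b$ equals $\infty \in \Pr^1(\f_\ell)$, in which case the displayed matrix must be read projectively (the column $(a,1)^t$ representing $\infty$ should be understood as $(1,0)^t$). For instance, if $a = \infty$, the matrix $\begin{pmatrix} b & 1 \\ 1 & 0 \end{pmatrix}$ has determinant $-1$ and sends $0 \mapsto \infty$, $\infty \mapsto b$. Equivalently, one can reduce to the finite case by noting that $G$ acts transitively on ordered pairs of distinct points in $\Pr^1(\f_\ell)$, so the lemma's assertion need only be checked on a single representative pair, with the stated matrix then exhibited as the natural uniform formula on the open subset where both coordinates are finite.

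I do not anticipate a main obstacle here: the statement is really a convention-fixing computation, and the only thing to be careful about is the projective interpretation when $\infty$ appears as a coordinate.
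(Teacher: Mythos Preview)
Your proof is correct and follows essentially the same approach as the paper: the paper works in homogeneous coordinates, writing $0 = (0,1)^t$ and $\infty = (1,0)^t$ and observing that the columns of $g$ are $(b,1)^t$ and $(a,1)^t$, which is exactly your M\"obius computation $g(0)=a$, $g(\infty)=b$ in affine form. Your extra remark on the case $a=\infty$ or $b=\infty$ is not in the paper (which tacitly treats only finite $a,b$, consistent with its later use of the lemma), but it does no harm.
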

\begin{proof}
The point at infinity $\infty$ is denoted by $(1,0)^{t}$ and the point 0 by
$(0,1)^{t}$. We require a matrix $g$ such that $g \cdot 0 =
(a,1)^{t}$ and $g \cdot \infty = (b,1)^{t}$, which is given by the above matrix.
\end{proof}

The finite field $\f_{\ell^2}$ is a vector space over $\f_\ell$ of
dimension $2$. The basis $\left\{ 1, \sqrt{\epsilon} \right\}$ gives
us an identification $\f_{\ell^2} \cong \f_\ell + \sqrt{\epsilon}
\f_\ell$. Thus, for every $z \in \f_{\ell^2}$, we can write $z = x+
\sqrt{\epsilon} y$ for some $x, y \in \f_\ell$.

\begin{lem}
\label{geodesicircle}
The quadratic equation 
\begin{equation}
\label{equN} \left( x - \frac{a+b}{2} \right)^2 - \epsilon y^2 =
\left(\frac{b - a}{2}\right)^2.
\end{equation}
gives the geodesic $\gamma_{\left\{a,b\right\}}$ with coordinates(see Figure~\ref{m2})
\begin{align*}
& x =\frac{a- \epsilon \lambda^2 b}{1-\epsilon \lambda^2},\\
& y= \lambda \left( \frac{a-b}{1-\epsilon \lambda^2} \right).
\end{align*}
\end{lem}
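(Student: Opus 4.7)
The plan is to verify the lemma by an explicit computation using Lemma~\ref{g-matrix}. By definition $\gamma_{\{0,\infty\}} = \{\lambda\sqrt{\epsilon} : \lambda \in \f_\ell^\times\}$, so by \eqref{geodesic} and the choice of $g = \begin{pmatrix} b & a \\ 1 & 1 \end{pmatrix}$, the geodesic $\gamma_{\{a,b\}}$ is the image of $\gamma_{\{0,\infty\}}$ under the M\"obius action of $g$ on $\HH_\ell \subset \Pr^1(\f_{\ell^2})$, namely the set of points
\[
z_\lambda \;=\; g \cdot (\lambda\sqrt{\epsilon}) \;=\; \frac{b\lambda\sqrt{\epsilon} + a}{\lambda\sqrt{\epsilon} + 1},\qquad \lambda \in \f_\ell^\times.
\]

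Next, I will rationalize the denominator to read off the coordinates $x,y$ in the decomposition $z_\lambda = x + y\sqrt{\epsilon}$. Multiplying numerator and denominator by $1 - \lambda\sqrt{\epsilon}$ gives denominator $1 - \epsilon\lambda^2$, which is nonzero because $\epsilon$ is a non-square and therefore $\epsilon\lambda^2 \neq 1$ for any $\lambda \in \f_\ell^\times$. Expanding the numerator $(b\lambda\sqrt{\epsilon}+a)(1-\lambda\sqrt{\epsilon}) = (a - \epsilon\lambda^2 b) + \lambda(b-a)\sqrt{\epsilon}$ yields exactly the formulas claimed for $x$ and $y$ (up to the sign ambiguity in $y$, which is immaterial in $\HH_\ell$ since $x + y\sqrt{\epsilon} \sim x - y\sqrt{\epsilon}$). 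Note also that $y \neq 0$, as $a \neq b$ and $\lambda \neq 0$, so the points really lie in $\HH_\ell$.

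Finally, to verify the quadratic equation \eqref{equN}, I will substitute these parametric expressions. A short computation shows
\[
x - \frac{a+b}{2} \;=\; \frac{(a-b)(1+\epsilon\lambda^2)}{2(1-\epsilon\lambda^2)},
\]
from which
\[
\left(x - \frac{a+b}{2}\right)^2 - \epsilon y^2 \;=\; \frac{(a-b)^2}{(1-\epsilon\lambda^2)^2}\cdot\frac{(1+\epsilon\lambda^2)^2 - 4\epsilon\lambda^2}{4} \;=\; \frac{(a-b)^2}{4}
\]
using the identity $(1+\epsilon\lambda^2)^2 - 4\epsilon\lambda^2 = (1-\epsilon\lambda^2)^2$. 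This is $((b-a)/2)^2$, establishing \eqref{equN}. Since the whole argument is a sequence of elementary manipulations with no genuine obstacle, the only thing to keep track of carefully is the non-vanishing of $1-\epsilon\lambda^2$ in the rationalization step, which is guaranteed by $\epsilon$ being a non-square.
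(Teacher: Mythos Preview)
Your proof is correct and follows the same approach as the paper: apply the matrix $g$ from Lemma~\ref{g-matrix} to $\lambda\sqrt{\epsilon}$ via the M\"obius action and rationalize the denominator to extract $x$ and $y$. In fact your argument is more complete than the paper's, since you also verify that the parametrization satisfies the quadratic equation~\eqref{equN} and you check the non-vanishing of $1-\epsilon\lambda^2$, both of which the paper leaves implicit.
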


\begin{figure}
\centering
\begin{tikzpicture}[domain=-4:4]
    \draw[->] (-4,0 ) -- (4,0) node[right] {$\f_\ell$}; 
    \draw[->] (0,-3.75) -- (0,3.75) node[above] {$\f_\ell \sqrt\epsilon$};
    \draw(1,0)  circle (2cm) node [black,below left]{$O$};
    \coordinate (O) at (1,0);
    \def\radius{2cm}
    \draw (O) circle[radius=\radius];
    \fill (O) circle[radius=2pt] node[below left] {O};
    \coordinate (a) at (-1,0);
    \fill (a) circle[radius=2pt] node[below left] {$a$};
    \coordinate (b) at (3,0);
    \fill (b) circle[radius=2pt] node[below right] {$b$};
    \draw(1,2) node[above]{$\gamma_{\left\{a,b\right\}}$};
\end{tikzpicture}
\caption{ The geodesic $\gamma_{\left\{a,b\right\}}$ in $\HH_\ell$}
\label{m2}
\end{figure}

\begin{proof}
Writing $g(\lambda\sqrt\epsilon,1)^{t}$ as a fraction and then rationalizing it, we obtain:
\begin{align*}
& \frac{b\lambda \sqrt\epsilon+a}{\lambda \sqrt\epsilon+1} 
= \frac{a-b\lambda^2 \epsilon}{1-\epsilon \lambda^2}+\sqrt\epsilon \frac{\lambda(b-a)}{1-\epsilon \lambda^2}.
\end{align*}
Therefore, as $\gamma_{\left\{a,b\right\}} \in \f_{\ell^2}^{\times}$ we conclude $x,y$ from the above expression are given by (see Figure~\ref{m2})
\begin{align*}
& x =\frac{a- \epsilon \lambda^2 b}{1-\epsilon \lambda^2},\\
& y= \lambda \left( \frac{a-b}{1-\epsilon \lambda^2} \right).
\end{align*}
\end{proof}

\subsection{Coordinates for $G/N$ and $G/N'$}~\\

We need a more convenient coordinate to represent elements in (a certain subset of) $(\Pr^1(\f_\ell) \times \Pr^1(\f_\ell) - \Delta)/\sim$ and
$\HH_\ell$, where $(\Pr^1(\f_\ell) \times \Pr^1(\f_\ell) -\Delta)/\sim$ is in bijection with $G/N$, and $\HH_\ell$ is in bijection with $G/N'$.
\begin{lem}
\label{bijectN} Let $A_+ = (\f_\ell \times \f_\ell - \Delta)/\sim$ and
$B_+ = \left\{(t, n): t^2 - 4n \neq 0 \text{ is a square in } \f_\ell
\right\}$. Then there is a bijection between the sets $A_+$ and $B_+$
given by
\begin{align*}
\left\{a, b \right\}  \mapsto (a+b,ab).
\end{align*}
\end{lem}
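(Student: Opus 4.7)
The plan is to show that the map $\Phi : \{a,b\} \mapsto (a+b, ab)$ is well-defined, has image landing in $B_+$, and admits an inverse via the quadratic formula.

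First, I would verify well-definedness: the expressions $a+b$ and $ab$ are symmetric in $a,b$, so the map descends to the quotient by $\sim$. To see the image lies in $B_+$, note that
\begin{equation*}
(a+b)^2 - 4ab = (a-b)^2,
\end{equation*}
which is a nonzero square in $\f_\ell$ because $a \neq b$ by the assumption $\{a,b\} \in A_+$.

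For the inverse, given $(t,n) \in B_+$, consider the polynomial $f(X) = X^2 - tX + n \in \f_\ell[X]$. Its discriminant is $t^2 - 4n$, which by hypothesis is a nonzero square, say $t^2 - 4n = d^2$ with $d \in \f_\ell^\times$. Then $f(X)$ has two distinct roots in $\f_\ell$, namely $a = (t+d)/2$ and $b = (t-d)/2$, and the unordered pair $\{a,b\}$ lies in $A_+$. (Here we use that $\ell$ is odd so that $2$ is invertible.) This defines a candidate inverse $\Psi : (t,n) \mapsto \{a,b\}$.

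Finally, I would check $\Psi \circ \Phi = \mathrm{id}_{A_+}$ and $\Phi \circ \Psi = \mathrm{id}_{B_+}$. The first holds because $a$ and $b$ are, by Vieta's relations, the two roots of $X^2 - (a+b)X + ab$, and this pair of roots is unique as an unordered pair. The second is immediate from Vieta: the sum of the roots of $f$ is $t$ and their product is $n$. I don't anticipate any real obstacle: the statement is essentially Vieta's formulas packaged with the observation that splitting of a monic quadratic over $\f_\ell$ into distinct linear factors is controlled precisely by the discriminant being a nonzero square.
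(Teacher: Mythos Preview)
Your proof is correct and follows essentially the same approach as the paper: the paper simply states that the inverse map sends $(t,n)$ to the set of roots in $\f_\ell$ of $x^2 - tx + n$, which is exactly your $\Psi$. You have supplied the routine verifications (well-definedness, image in $B_+$, and the two compositions) that the paper leaves implicit.
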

\begin{proof}
The inverse map is given by $(t,n) \mapsto \left\{ a,b \right\}$, where $\left\{ a, b \right\}$ is the set of roots in $\f_\ell$ of the polynomial $x^2 - t x + n$.

%
%
\end{proof}

\begin{lem}
\label{bijectNp} Let $A'_+ = \HH_\ell$ and $B'_+ = \left\{ (T,N) : T^2 -
4 N \text{ is a non-square in } \f_\ell \right\}$. Then there is a
bijection between the sets $A'_+$ and $B'_+$ given by
\begin{align*}
\left\{z, \bar z\right\} \mapsto (z + \bar z, z\bar z).
\end{align*}

\end{lem}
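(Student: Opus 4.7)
The plan is to mirror the proof of Lemma~\ref{bijectN} by writing down explicit maps in both directions and checking they are mutually inverse. First I would verify that the stated forward map lands in $B'_+$. Given $\{z,\bar z\} \in \HH_\ell$ with $z = x + y\sqrt{\epsilon}$ and $y \in \f_\ell^\times$, a direct computation gives $T = z + \bar z = 2x \in \f_\ell$ and $N = z\bar z = x^2 - \epsilon y^2 \in \f_\ell$, so the image lies in $\f_\ell \times \f_\ell$. The key identity is then
\begin{equation*}
T^2 - 4N = 4x^2 - 4(x^2 - \epsilon y^2) = 4\epsilon y^2,
\end{equation*}
which is a non-square in $\f_\ell$ because $\epsilon$ is a non-square and $y \neq 0$. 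Note also that the forward map is well-defined on the quotient $\HH_\ell = \C_\ell/\sim$ since $(z+\bar z, z\bar z)$ is manifestly symmetric in $z$ and $\bar z$.

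Next I would construct the inverse map in the spirit of Lemma~\ref{bijectN}: send $(T,N) \in B'_+$ to the unordered pair of roots of $X^2 - TX + N$. Since the discriminant $T^2 - 4N$ is a non-square in $\f_\ell^\times$, and since the non-squares form a single coset of $(\f_\ell^\times)^2$ represented by $\epsilon$, I can write $T^2 - 4N = \epsilon u^2$ for some $u \in \f_\ell^\times$. Then the two roots in $\f_{\ell^2}$ are
\begin{equation*}
z, \bar z \;=\; \frac{T \pm u\sqrt{\epsilon}}{2} \;=\; \frac{T}{2} \pm \frac{u}{2}\sqrt{\epsilon},
\end{equation*}
which is a Frobenius-conjugate pair with non-zero $\sqrt{\epsilon}$-coefficient, hence an element $\{z,\bar z\} \in \HH_\ell$.

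Finally I would verify the two maps are mutually inverse, which is immediate: composed one way, the pair $\{z,\bar z\}$ is recovered as the roots of $X^2 - (z+\bar z)X + z\bar z$; composed the other way, $T = z + \bar z$ and $N = z\bar z$ by Vieta's formulas. There is no real obstacle here — the content of the lemma is the bookkeeping identity $T^2 - 4N = 4\epsilon y^2$ that matches the non-square condition defining $B'_+$ with the $y \in \f_\ell^\times$ condition defining $\C_\ell$, and which ensures the forward map hits $B'_+$ surjectively since every non-square in $\f_\ell^\times$ is of the form $4\epsilon y^2$ for some $y \in \f_\ell^\times$.
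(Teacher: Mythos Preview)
Your proposal is correct and follows essentially the same approach as the paper: the paper's proof simply states that the inverse map sends $(T,N)$ to the set of roots in $\HH_\ell$ of the polynomial $x^2 - Tx + N$. Your version is more fleshed out (verifying the forward map lands in $B'_+$, checking well-definedness, and writing the roots explicitly), but the underlying argument is identical.
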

\begin{proof}
The inverse map is given by $(T,N) \mapsto \left\{ z, \bar z \right\}$, where $\left\{ z, \bar z \right\}$ is the set of roots in $\HH_\ell$ of the polynomial $x^2 - T x + N$.

%
%
\end{proof}

\begin{lem}
\label{lemm}
Let 
\begin{align*}
& B_+ =\left\{(t,n):t^2 - 4n \neq 0 \text{ is a square in } \f_\ell \right\},\\ 
& S_+ = \left\{(t, m):m \text{ is a square in } \f_\ell\right\}.
\end{align*}
Then there is a bijection between the sets $B_+$ and $S_+$ given by:
\begin{align*}
& (t,n) \mapsto (t, m),
\end{align*}
where $m= t^2 - 4n$.
\end{lem}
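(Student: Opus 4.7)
The plan is entirely elementary: simply write the inverse explicitly and check both directions. Since $\ell$ is an odd prime, $4$ is invertible in $\f_\ell$, so we may define a candidate inverse $\phi^{-1} : S_+ \to B_+$ by
\begin{equation*}
  (t, m) \;\mapsto\; \left(t, \tfrac{t^2 - m}{4}\right).
\end{equation*}
The strategy is then to verify that both $\phi$ and $\phi^{-1}$ land in the claimed sets, and that the two compositions are the identity.

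For well-definedness: if $(t, n) \in B_+$, then by hypothesis $m := t^2 - 4n$ is a (nonzero) square in $\f_\ell$, so $(t, m) \in S_+$. Conversely, if $(t, m) \in S_+$, then setting $n := (t^2 - m)/4$ gives $t^2 - 4n = m$, which is a (nonzero) square, so $(t, n) \in B_+$. The two composition identities
\begin{equation*}
  (t, n) \;\longmapsto\; (t, t^2 - 4n) \;\longmapsto\; \left(t, \tfrac{t^2 - (t^2 - 4n)}{4}\right) = (t, n)
\end{equation*}
and the analogous one in the other direction are then immediate from a one-line calculation.

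There is essentially no obstacle here; the only point that warrants a sanity check is the interpretation of ``square'' in the definition of $S_+$. To make the bijection literally correct, one should read $S_+$ as requiring $m$ to be a \emph{nonzero} square in $\f_\ell$, matching the condition $t^2 - 4n \neq 0$ built into $B_+$; otherwise the pairs $(t, 0) \in S_+$ would have no preimage. Under that (surely intended) reading, the correspondence $(t, n) \leftrightarrow (t, m)$ with $m = t^2 - 4n$ is manifestly a bijection.
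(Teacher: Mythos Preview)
Your proof is correct and follows exactly the paper's approach: the paper simply states the inverse map $(t,m) \mapsto (t, \tfrac{t^2-m}{4})$ without further comment. Your additional remark that $S_+$ must be read as requiring $m$ to be a \emph{nonzero} square is a valid clarification of a minor ambiguity in the statement.
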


\begin{proof}
The inverse map is given by $(t,m) \mapsto (t, \frac{t^2 - m}{4})$.
\end{proof}

\begin{lem}
\label{lemM}
Let 
\begin{align*} 
& B'_+ = \left\{(T,N):T^2 - 4N \text{ is a non-square in }\f_\ell \right\}, \\
& S'_+ = \left\{(T,M): M \text{ is a non-square in }\f_\ell \right\}. \end{align*} Then there is a bijection between the sets $B'_+$ and $S'_+$ given by:
\begin{align*}
& (T,N) \mapsto (T,M),
\end{align*}
where $M = T^2 - 4N$.
\end{lem}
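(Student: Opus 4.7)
The plan is to mirror the proof of Lemma~\ref{lemm} exactly, since the situation is formally identical with ``square'' replaced by ``non-square.'' I would explicitly write down the inverse of the proposed map and check mutual inversion, using crucially that $\ell$ is odd so that $4$ is invertible in $\f_\ell$.

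Concretely, I would define the inverse $S'_+ \to B'_+$ by $(T,M) \mapsto \bigl(T, \tfrac{T^2 - M}{4}\bigr)$. Two well-definedness checks are needed: first, for $(T,N) \in B'_+$, the image $(T, T^2 - 4N)$ lies in $S'_+$ because $M = T^2 - 4N$ is a non-square by definition of $B'_+$; second, for $(T,M) \in S'_+$, setting $N = (T^2 - M)/4$, we have $T^2 - 4N = M$, which is a non-square by hypothesis, placing $(T,N)$ in $B'_+$. Mutual inversion is then a direct algebraic verification: one composition gives $(T,N) \mapsto (T, T^2-4N) \mapsto (T, (T^2 - (T^2-4N))/4) = (T,N)$, and the other gives $(T,M) \mapsto (T, (T^2-M)/4) \mapsto (T, T^2 - (T^2 - M)) = (T,M)$.

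There is essentially no obstacle here; the only subtlety is remembering that division by $4$ requires $\ell$ to be odd, which is in force throughout the paper. This lemma, combined with Lemma~\ref{bijectNp}, will allow us to parametrize $\HH_\ell$ (equivalently $G/N'$) by pairs $(T,M)$ with $M$ a non-square, which is the more convenient coordinate system alluded to in the subsection heading.
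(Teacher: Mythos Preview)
Your proof is correct and takes exactly the same approach as the paper: the paper's proof consists of the single sentence ``The inverse map is given by $(T,M) \mapsto (T, \frac{T^2 - M}{4})$,'' and you have simply spelled out the well-definedness and mutual-inversion checks that this sentence implicitly leaves to the reader.
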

\begin{proof}
The inverse map is given by $(T,M) \mapsto (T, \frac{T^2 - M}{4})$.
\end{proof}

\subsection{Proof of Theorem~\ref{main-N}}~\\

By Lemma~\ref{doublecosetoperatorN}, $\psi^+$ is a $\q[G]$-module homomorphism. To prove Theorem~\ref{main-N}, it suffices to prove that the
restriction
\begin{equation}
\label{equationmain}
  \psi^+_{\mid_{\q[A_+]}} : \q[A_+] \rightarrow \q[\HH_\ell],
\end{equation}
is an isomorphism of $\q$-vector spaces.

Using the bijections given by Lemmas~\ref{bijectN}-\ref{lemM},
to prove ~\eqref{equationmain} is equivalent to proving that
\begin{equation*}
 \psi^+: \q[S_+] \rightarrow \q[S'_+],
\end{equation*}
is an isomorphism of $\q$-vector spaces, where $\psi^{+}$ is the same map as $\psi^{+}_{\mid_{\q[A]}}$ under the identifications given by two bijections $A_+ \leftrightarrow S_+$ and $\HH_\ell \leftrightarrow S'_+$.

Recall the equation giving the geodesic between $a$ and $b$ is
\begin{equation*}
\left( x - \frac{a+b}{2} \right)^2 - \epsilon y^2 = \left(\frac{b-a}{2}\right)^2,
\end{equation*}
by Lemma~\ref{geodesicircle}. This equation becomes
\begin{align*}
 & \left( x - \frac{a+b}{2} \right)^2 - \epsilon y^2 = \left(\frac{b-a}{2}\right)^2 \\
 \iff & (T-t)^2 = l^2 + 4\epsilon y^2 = m+M,
\end{align*}
in the new coordinates from Lemmas \ref{lemm} and \ref{lemM}. Here,  $m$ and $M$ satisfy $(\frac{m}{\ell})=1$ and $(\frac{M}{\ell})= -1$, where $(\frac{\cdot}{\ell})$ is the Legendre symbol modulo $\ell$.

Hence, the matrix of $\psi^+_{\mid \q[S_+]}$ with respect to the basis
$S_+$ is given by
\begin{equation}
  a_{(T,M),(t,m)} =
  \begin{cases}
    1 & \text{ if } (T-t)^2 \equiv m+M \pod{\ell} ,\\
    0 & \text{otherwise}.
  \end{cases}
\end{equation}

Thus, the above matrix is an $\frac{\ell-1}{2} \times
\frac{\ell-1}{2}$ matrix $D_{m,M}$, with entries being the $\ell
\times \ell$ matrices given by
\begin{align*}
  (D_{m,M})_{t,T} :=
  \begin{cases}
    1 & \text{ if } (T-t)^2 \equiv m+M \pod{\ell}, \\
    0 & \text{otherwise}.
  \end{cases}
\end{align*}

Let $D$ be the matrix obtained from the $\ell \times \ell$ identity matrix by permuting its columns according to the cycle $(1\,2\,3...\,\ell)$.

\begin{defn}
A circulant matrix is a matrix of the form
\begin{equation*}
\begin{pmatrix}
a_0 & a_1 & a_2 & ... & a_{r-1} \\ a_{r-1} & a_0 & a_1 & ... &
a_{r-2} \\ \vdots & & & & \vdots \\ a_{1} & a_{2} & ... & a_{r-1} & a_0
\end{pmatrix},
\end{equation*}
that is, a matrix whose $i$-th row is obtained from the $(i-1)$-th
row by cyclically shifting the entries one position to the right.
\end{defn}
\begin{lem}
$D_{m,M} = \sum_{x^2 \equiv m+M (\ell)} D^x$
\end{lem}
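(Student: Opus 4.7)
The plan is a direct entry-by-entry comparison of the two sides. First I would verify that $D^x$ acts as a cyclic shift by $x$: since $D$ corresponds to the cycle $(1\,2\,\ldots\,\ell)$ acting on columns of the $\ell \times \ell$ identity matrix, one has $(D)_{t,T} = 1$ iff $T \equiv t+1 \pmod \ell$ and $0$ otherwise, and hence by iteration $(D^x)_{t,T} = 1$ iff $T \equiv t+x \pmod \ell$ and $0$ otherwise, for every integer $x$ (with $x$ only mattering modulo $\ell$).

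Second, I would fix $(t,T)$ and compute the corresponding entry of $\sum_{x : x^2 \equiv m+M} D^x$. Since each $x$ in the sum contributes along a single shifted diagonal and distinct values of $x$ in $\f_\ell$ yield disjoint supports, this entry equals the number of $x \in \f_\ell$ with $x^2 \equiv m+M \pmod \ell$ and $x \equiv T-t \pmod \ell$. This count is $1$ exactly when $T-t$ itself is a square root of $m+M$, i.e., when $(T-t)^2 \equiv m+M \pmod \ell$, and $0$ otherwise. By the definition of $D_{m,M}$ given just before the lemma, this is precisely $(D_{m,M})_{t,T}$, so the two matrices agree.

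The only bookkeeping subtlety is that $m+M$ can have $0$, $1$, or $2$ square roots in $\f_\ell$, but in each case the number of nonzero entries per row on both sides matches: $2$ when $m+M$ is a nonzero square, $0$ when $m+M$ is a nonzero non-square, and $1$ when $m+M \equiv 0 \pmod \ell$ (a case which can occur only when $\ell \equiv 3 \pmod 4$, since we require $m$ a square and $M = -m$ a non-square, forcing $-1$ to be a non-square). I do not anticipate any real obstacle here; the lemma amounts to a matching of indices.
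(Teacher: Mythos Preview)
Your proof is correct and follows essentially the same entry-by-entry comparison as the paper: both identify $(D^x)_{t,T}$ with the indicator of $T\equiv t+x\pmod\ell$ and then match against the defining condition $(T-t)^2\equiv m+M\pmod\ell$, splitting according to whether $m+M$ is a square. Your treatment is slightly more careful in explicitly addressing the boundary case $m+M\equiv 0\pmod\ell$ (one square root), which the paper's case split handles only implicitly.
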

\begin{proof}
If $m + M$ is not a square in $\f_\ell$, therefore $D_{m,M}$ is a
zero matrix due to 0 entries, so $D_{m,M}= \sum _{x^2 \equiv m + M
(\ell)} D^x = 0$.

If $m + M = x^2$ is a square in $\f_\ell$. Then $T-t = \pm x$
and
\begin{equation*}
  (D_{m,M})_{t,T} = \begin{cases}
    1 & T = t \pm x ,\\
    0 & \text{otherwise}.
  \end{cases}
\end{equation*}
In this case, $D_{m,M}$ coincides with $\sum_{x^2 \equiv m+M \pod\ell} D^x$.
\end{proof}

Let $\zeta$ be an $\ell$-th root of unity. The matrix $D_{m,M}$ has
entries in $\q[D]$, but we can replace the matrix $D$ by an element
in the cyclotomic field $\q(\zeta)$ in the following manner: the minimal polynomial of $D$ over $\q$ is given by $m(x) = x^{\ell-1}+\cdots+x+1$, so we have that
\begin{align*}
& \q[D] \cong \frac{\q[x]}{(m(x))} \cong \q[\zeta] \cong \q(\zeta),
\end{align*}
where $\zeta$ is a primitive $\ell$-th root of unity.
\begin{lem}
\label{ramified}
Let $\mathfrak{L}$ be a prime of $\q(\zeta)$ above $\ell$. Then
$\zeta \equiv 1 \pod{\mathfrak{L}}$.
\end{lem}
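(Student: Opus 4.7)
The plan is to show that $1 - \zeta$ itself generates the unique prime $\mathfrak{L}$ of $\mathbb{Q}(\zeta)$ above $\ell$, from which $\zeta \equiv 1 \pmod{\mathfrak{L}}$ follows immediately.

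First I would start from the factorization
\begin{equation*}
  \Phi_\ell(x) = \frac{x^\ell - 1}{x - 1} = \prod_{k=1}^{\ell-1}(x - \zeta^k)
\end{equation*}
of the $\ell$-th cyclotomic polynomial. Evaluating at $x = 1$ gives the identity $\ell = \prod_{k=1}^{\ell-1}(1 - \zeta^k)$ in the ring of integers $\mathbb{Z}[\zeta]$.

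Next I would show that for every $k \in \{1, \ldots, \ell-1\}$ the element $(1 - \zeta^k)$ is an associate of $(1 - \zeta)$. On the one hand,
\begin{equation*}
  \frac{1 - \zeta^k}{1 - \zeta} = 1 + \zeta + \zeta^2 + \cdots + \zeta^{k-1} \in \mathbb{Z}[\zeta].
\end{equation*}
On the other hand, since $k$ is invertible modulo $\ell$, choosing $k'$ with $k k' \equiv 1 \pmod{\ell}$ we may write $\zeta = (\zeta^k)^{k'}$, so by the same argument $(1 - \zeta)/(1 - \zeta^k)$ is also in $\mathbb{Z}[\zeta]$. Hence $(1 - \zeta^k) = u_k (1 - \zeta)$ for a unit $u_k$.

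Combining these observations, $\ell = u \cdot (1 - \zeta)^{\ell - 1}$ in $\mathbb{Z}[\zeta]$ for some unit $u$. Because $[\mathbb{Q}(\zeta) : \mathbb{Q}] = \ell - 1$ and $\ell$ has at least one prime $\mathfrak{L}$ above it, the ideal factorization forces $(1 - \zeta)$ to generate that prime and $\ell$ to be totally ramified. In particular $\mathfrak{L} = (1 - \zeta)$, so $1 - \zeta \in \mathfrak{L}$, which is exactly the congruence $\zeta \equiv 1 \pmod{\mathfrak{L}}$. There is no real obstacle here; the only subtlety is noting that the quotients $(1-\zeta^k)/(1-\zeta)$ are genuinely units (not merely algebraic integers), which is what lets us conclude the ramification index is $\ell-1$ and pins down $\mathfrak{L}$ as the principal ideal $(1-\zeta)$.
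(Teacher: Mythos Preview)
Your argument is correct and is precisely the standard proof of this fact; the paper itself does not give an independent argument but simply cites \cite[Lemma~10.1]{Neukirch}, whose proof is essentially the one you have written out. So your approach agrees with the paper's, only with the details of the cited reference made explicit.
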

\begin{proof}
\cite[lemma 10.1]{Neukirch}.
\end{proof}

From the above discussion, we see that $D_{m,M} = \sum_{x^2 \equiv m +M(\ell)} 1$ (after reduction modulo $\mathfrak{L}$). We label $m,M$ as $m=g^{2i}$ for $0 \le i \le r-1$ and $M = \epsilon g^{2j}$ for $0 \le j \le r-1$, where $r=\frac{\ell-1}{2}$ and $g$ is a primitive root modulo $\ell$. This gives us a new matrix denoted by $D_{i,j}$:

\begin{equation}
\label{circulant}
  D_{i,j} = \sum_{x^2 \equiv g^{2i} + \epsilon g^{2j} \pod{\ell}  } 1.
\end{equation}

\begin{prop}
\label{determinantofcirculant}
The determinant of a circulant matrix is given by
\begin{equation}
\label{circulantM}
  \prod_{k=0}^{r-1} (a_0 + a_1 \omega_k + a_2 \omega_k^2 + ... + a_{r-1} \omega_k^{r-1}) = \prod_{k=0} ^{r-1} \left(\sum_{j=0} ^{r-1} a_j \omega_k ^{j}\right),
\end{equation}
where $\omega_k = e^{\frac{2\pi i k}{r}} = \omega^k, r \ge 1$ and
 $\omega = e^{\frac{2\pi i}{r}}$.
\end{prop}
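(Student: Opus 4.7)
The plan is to realize the general circulant matrix as a polynomial in a single cyclic shift matrix and then diagonalize. Let $P$ denote the $r\times r$ permutation matrix with $P_{ij}=1$ precisely when $j\equiv i+1\pmod r$; this is the $r\times r$ analogue of the matrix $D$ introduced just above. Comparing entries, one checks that $(P^k)_{ij}=1$ iff $j\equiv i+k\pmod r$, so the circulant matrix $C$ with first row $(a_0,\ldots,a_{r-1})$ satisfies
$$C \;=\; a_0 I + a_1 P + a_2 P^2 + \cdots + a_{r-1} P^{r-1} \;=\; p(P),$$
where $p(x)=\sum_{j=0}^{r-1}a_j x^j$.

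Next I would diagonalize $P$ directly. The eigenvalue equation $Pv=\lambda v$ forces $v_{i+1}=\lambda v_i$ (indices mod $r$), so up to scaling $v=(1,\lambda,\lambda^2,\ldots,\lambda^{r-1})^t$ with $\lambda^r=1$. Hence $P$ has $r$ distinct eigenvalues, namely the $r$-th roots of unity $\omega_k=\omega^k$ for $k=0,\ldots,r-1$, with corresponding eigenvectors $v^{(k)}=(1,\omega_k,\ldots,\omega_k^{r-1})^t$ whose matrix is a nonsingular Vandermonde. In particular $P$ is diagonalizable over $\q(\omega)$.

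Because $C=p(P)$ is a polynomial in $P$, the vectors $v^{(k)}$ are also eigenvectors of $C$, with eigenvalues $p(\omega_k)=\sum_{j=0}^{r-1}a_j\omega_k^j$. The determinant is then the product of these eigenvalues, which yields the claimed formula \eqref{circulantM}. The argument is essentially bookkeeping: the only point requiring care is to align the paper's row-shifting description of a circulant with the chosen orientation of $P$, after which no real obstacle remains.
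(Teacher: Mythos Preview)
Your proof is correct and follows essentially the same approach as the paper: both exhibit the vectors $v^{(k)}=(1,\omega_k,\ldots,\omega_k^{r-1})^t$ as a basis of eigenvectors and read off the determinant as the product of eigenvalues. The only difference is cosmetic---you factor through the shift matrix $P$ and write $C=p(P)$, while the paper applies $C$ to the $v^{(k)}$ directly---and your Vandermonde justification for linear independence is in fact cleaner than the paper's, which appeals to distinctness of the $\sigma_k$ (not guaranteed in general).
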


\begin{proof}
Suppose
\begin{equation*}
A = \begin{pmatrix} a_0 & a_1 & a_2 & ... & a_{r-1} \\ a_{r-1} & a_0
& a_1 & ... & a_{r-2} \\ \vdots & \vdots & & & \vdots \\ a_{1} & a_{2} & a_3 & ... &
a_0
\end{pmatrix},
\end{equation*}
is a circulant matrix. Let $\omega_{k} = e^{2\pi i k/r}$ for $0 \le k \le
r-1$. Now, consider the row vector $(1, \omega_k,
\omega_k ^2, ..., \omega_k ^{r-1})$, whose transpose we denote by
$\gamma_{k} \in \mathbb{C}^r$, and let $\sigma_{k} = a_0 + a_1
\omega_{k} + a_2 \omega_k ^2 + ... + a_{r-1} \omega_k ^{r-1}$.
Then we get that
\begin{equation*}
\begin{pmatrix} a_0 & a_1 & ... & a_{r-1} \\ a_{r-1} & a_0 & ... & a_{r-2} \\ \vdots \\ a_1 & a_2 & ... & a_0 \end{pmatrix} \begin{pmatrix} 1 \\ \omega_k \\ \vdots \\ \omega_k ^{r-1} \end{pmatrix} = \sigma_k \begin{pmatrix} 1 \\ \omega_k \\ \vdots \\ \omega_k ^{r-1} \end{pmatrix},
\end{equation*}
which implies that $\sigma_k$ is an eigenvalue of $A$ with
eigenvector $\gamma_k$. Furthermore, the set
$\left\{\gamma_0, \gamma_1, ..., \gamma_{r-1} \right\}$ is a
linearly independent set in $\mathbb{C}^{r}$, since the eigenvalues $\sigma_{k}$ are distinct. Therefore, a diagonal
matrix with the corresponding eigenvalues is conjugate to $A$, and hence
the determinant of $A$ is given by $\text{det}(A) =
\prod_{k=0} ^{r-1} \sigma_k$.
\end{proof}
For later reference, we call each factor in the above formula \textit{an eigenvalue} for $k$. We also let $r = \frac{\ell - 1}{2}$ in this section.
\begin{lem}
\label{equationmatrix} The matrix $D_{i,j}$ is an $r \times r$
circulant matrix.
\end{lem}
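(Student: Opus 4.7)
The plan is to show that the entry $D_{i,j}$ depends only on the difference $j - i \pmod{r}$, which is precisely the defining property of a circulant matrix as recalled in the preceding definition (the $(i,j)$ entry, $0 \le i,j \le r-1$, equals $a_{(j-i) \bmod r}$, so each row is a right cyclic shift of the previous one).

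From \eqref{circulant}, $D_{i,j}$ counts the elements $x \in \f_\ell$ with $x^2 \equiv g^{2i} + \epsilon g^{2j} \pmod{\ell}$. I would factor
\[
g^{2i} + \epsilon g^{2j} = g^{2i}\bigl(1 + \epsilon g^{2(j-i)}\bigr),
\]
and note that $g^{2i} = (g^i)^2$ is a nonzero square in $\f_\ell^\times$. The substitution $x = g^i y$ then gives a bijection between the square roots of $g^{2i} + \epsilon g^{2j}$ in $\f_\ell$ and the square roots of $1 + \epsilon g^{2(j-i)}$ in $\f_\ell$, so
\[
D_{i,j} = \#\bigl\{y \in \f_\ell : y^2 \equiv 1 + \epsilon g^{2(j-i)} \pmod{\ell}\bigr\}.
\]

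Finally, since $g$ has order $\ell - 1 = 2r$ in $\f_\ell^\times$, the element $g^{2(j-i)}$ depends only on the residue class of $j - i$ modulo $r$. Defining $a_k := \#\{y \in \f_\ell : y^2 \equiv 1 + \epsilon g^{2k} \pmod{\ell}\}$ for $0 \le k \le r-1$, I conclude $D_{i,j} = a_{(j-i) \bmod r}$, which is exactly the circulant form. No serious obstacle is anticipated; the only care needed is to check that the chosen indexing of squares by $m = g^{2i}$ and nonsquares by $M = \epsilon g^{2j}$ makes the cyclic shift direction match the convention in the circulant definition above, which is immediate from the factorization.
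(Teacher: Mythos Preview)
Your proof is correct and uses essentially the same idea as the paper: both arguments exploit that $g^{2i}+\epsilon g^{2j}$ factors as a nonzero square times an expression depending only on $j-i$, so the number of square roots is unchanged. The paper phrases this as the one-step diagonal shift $D_{i,j}=D_{i-1,j-1}$ via $g^{2i}+\epsilon g^{2j}=g^{2}\bigl(g^{2(i-1)}+\epsilon g^{2(j-1)}\bigr)$, whereas you factor out $g^{2i}$ all at once and make the bijection $x=g^{i}y$ explicit; the content is the same.
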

\begin{proof}

This follows because
\begin{equation*}
 D_{i,j} = \sum_{x^2 \equiv g^{2i} + \epsilon g^{2j}} 1 \equiv \sum_{x^2 \equiv g^2(g^{2(i-1)} +\epsilon g^{2(j-1)})} 1 = D_{i-1, j-1},
\end{equation*}
where the indices are taken modulo $\ell$.
\end{proof}

Remark that $D_{0,j} = a_j$ is equal to the number of solutions of $x^2 \equiv 1 + \epsilon g^{2j} \pod \ell$. To show that $D_{i,j}$ has non-zero determinant, it suffices to show that $D_{i,j}$ has non-zero determinant modulo $\ell$ in ~\eqref{circulantM}.

Using the above formula for the determinant of a circulant matrix,
it suffices to show in $\z[\omega]$ that we have
\begin{equation}
\label{nonzerow} a_0 + a_1 \omega_k +a_2 \omega_k^2 + ... + a_{r-1}
\omega_k^{r-1} \not\equiv 0 (\vartheta)
\end{equation}
for every $0 \le k \le r-1$, where $\vartheta$ is any prime above $\ell$ in $\z[\omega]$, $\omega_k = \omega^k$ and $\omega = e^{2\pi i/r}$.

\begin{lem}
  Let $\vartheta$ be a prime above $\ell$ in $\z[\omega]$ where $\omega = e^{2\pi i/r}$. Then
  $\omega \equiv g^2 (\vartheta)$, where $g$ is a primitive root
  modulo $\ell$.
\end{lem}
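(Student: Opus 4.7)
The plan is to combine cyclotomic splitting of $\ell$ in $\z[\omega]$ with an elementary matching between primitive $r$-th roots of unity and squares of primitive roots modulo $\ell$; throughout, $r=\frac{\ell-1}{2}$ and $\omega=e^{2\pi i/r}$.

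First I would invoke standard cyclotomic theory. Since $r\mid \ell-1$ and $\gcd(r,\ell)=1$, the prime $\ell$ splits completely in $\z[\omega]\cong \z[x]/\Phi_r(x)$, so for any prime $\vartheta\mid \ell$ of $\z[\omega]$ the residue field $\z[\omega]/\vartheta$ is canonically $\f_\ell$, and $\bar\omega:=\omega\bmod\vartheta$ lies in $\f_\ell^\times$. Next I would show that $\bar\omega$ is in fact a primitive $r$-th root of unity in $\f_\ell^\times$: separability of $x^r-1$ modulo $\ell$ implies that its $r$ distinct roots in $\f_\ell$ partition according to the cyclotomic factorization $x^r-1=\prod_{e\mid r}\Phi_e(x)$, and the roots of $\Phi_e(x)\bmod\ell$ are exactly the elements of order $e$; since $\omega$, hence $\bar\omega$, is a root of $\Phi_r$, the multiplicative order of $\bar\omega$ in $\f_\ell^\times$ is exactly $r$.

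Finally I would match $\bar\omega$ with the square of a primitive root. In the cyclic group $\f_\ell^\times$ of order $2r$, the subgroup of order $r$ is the group of nonzero squares, and its generators are precisely the squares of primitive roots modulo $\ell$: given any primitive $r$-th root of unity $\zeta'=h_0^{2j}$ with $\gcd(j,r)=1$, for a fixed primitive root $h_0$, a short case check on the parity of $r$ shows that $j$ can be adjusted modulo $r$ so that also $\gcd(j,2r)=1$, making $g:=h_0^j$ a primitive root with $g^2=\zeta'$. Applied to $\zeta'=\bar\omega$ this yields a primitive root $g$ modulo $\ell$ with $\omega\equiv g^2\pod\vartheta$, as required. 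The only step with any subtlety will be this last realization of every primitive $r$-th root of unity as the square of a primitive root; everything else is a direct application of complete splitting in cyclotomic extensions and separability of $x^r-1$ modulo $\ell$.
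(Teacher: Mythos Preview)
Your argument is correct and follows essentially the same route as the paper: identify the residue field as $\f_\ell$ via complete splitting, show that $\omega$ reduces to a primitive $r$-th root of unity, and conclude that this is $g^2$ for some primitive root $g$. You are actually more careful than the paper on the last step, supplying the parity-of-$r$ case check that justifies why every primitive $r$-th root of unity in $\f_\ell^\times$ arises as the square of a primitive root, a point the paper's proof leaves implicit.
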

\begin{proof}
Let $\mathcal{O} = \z[\omega]$ be the maximal order of $\q(\omega)$. The residue
field of $\vartheta$ is $\mathcal{O}/\vartheta \cong \f_\ell$.
Furthermore, since the polynomial $x^r - 1$ splits in
$\mathcal{O}/\vartheta[x] \cong \f_\ell[x]$ with distinct roots
$\omega_1 = \omega, \omega_2 = \omega^2, \ldots, \omega_r = \omega^r
= 1$, we have that every root of $x^r-1$ in $\f_\ell$ is a power of
$\omega \in \mathcal{O}/\vartheta \cong \f_\ell$. Hence, $\omega
\cong g^2 \pod{\vartheta}$ for some primitive root $g$ modulo
$\ell$.
\end{proof}


By the above lemma, to show \eqref{nonzerow}, it suffices to show
\begin{lem}
\begin{equation*}
 \sum _{j=0} ^{r-1} a_j (g^{2k})^j \not\equiv 0 (\ell),
\end{equation*}
for every $0 \le j, k \le r-1$.
\end{lem}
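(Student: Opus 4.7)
My plan is to translate the Legendre symbol character sum into a polynomial identity in $\f_\ell$ via Euler's criterion and then apply orthogonality on the subgroup of squares in $\f_\ell^\times$. Denote by $\chi$ the Legendre symbol modulo $\ell$ (extended by $\chi(0) = 0$); since $x^2 \equiv c \pmod \ell$ has $1 + \chi(c)$ solutions, one has $a_j = 1 + \chi(1 + \epsilon g^{2j})$.

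The substitution $u = g^{2j}$ sends $\{j = 0, \ldots, r-1\}$ bijectively to the subgroup $S \subset \f_\ell^\times$ of nonzero squares, and $(g^{2k})^j = u^k$, so the sum of interest becomes
\begin{equation*}
S_k := \sum_{j=0}^{r-1} a_j (g^{2k})^j \equiv \sum_{u \in S} u^k \; + \; \sum_{u \in S} \chi(1 + \epsilon u)\, u^k \pmod{\ell}.
\end{equation*}
The first sum is a geometric series over the cyclic group $S$ of order $r$: it equals $r$ if $k = 0$ and $0$ otherwise, since $g^{2k}$ acts trivially on $S$ exactly when $r \mid k$. For the second sum I would invoke Euler's criterion $\chi(x) \equiv x^r \pmod \ell$ (still valid at $x = 0$), replace $\chi(1 + \epsilon u)$ by $(1 + \epsilon u)^r$, expand binomially, and swap the order of summation to get
\begin{equation*}
\sum_{u \in S} \chi(1 + \epsilon u)\, u^k \equiv \sum_{i=0}^{r} \binom{r}{i} \epsilon^i \sum_{u \in S} u^{i+k} \pmod{\ell}.
\end{equation*}
Orthogonality retains only those $i$ with $r \mid (i + k)$; in the range $0 \le i \le r$ and $0 \le k \le r-1$, this forces $(i,k) \in \{(0,0), (r,0)\}$ or $i = r - k$ with $k \ge 1$.

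Combining, for $k = 0$ I obtain $S_0 \equiv r + r(1 + \epsilon^r) \equiv r + r(1 + \chi(\epsilon)) = r \pmod \ell$, where the cancellation uses that $\epsilon$ is a non-square, so $\epsilon^r \equiv -1 \pmod \ell$. For $1 \le k \le r-1$, the first sum vanishes and only $i = r - k$ survives in the second, giving $S_k \equiv r\binom{r}{k}\epsilon^{r-k} \pmod \ell$. Since $0 < k < r < \ell$, no factor of $\ell$ appears in the integer $\binom{r}{k}$, and both $r$ and $\epsilon$ are units in $\f_\ell$, so $S_k \not\equiv 0 \pmod \ell$ in every case.

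The one delicate point I foresee is the $k = 0$ line: the second sum collapses to zero precisely because $\epsilon$ is a non-square, and one must rely on the leftover $r$ from the first sum to conclude nonvanishing — this is exactly where the standing hypothesis on $\epsilon$ enters. Everything else reduces to bookkeeping the divisibility constraint $r \mid (i+k)$ in the permitted range $0 \le i + k \le 2r - 1$, which is elementary.
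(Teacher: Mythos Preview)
Your argument is correct and proceeds along a genuinely different line from the paper's. The paper rewrites $\sum_j a_j g^{2kj}$ as a sum over the conic $x^2 - \epsilon y^2 = 1$, invokes the rational parametrization of that conic by $\lambda \in \f_\ell^\times$ (coming from Lemma~\ref{geodesicircle}), and thereby converts the sum into $\sum_{\lambda} (\lambda^{-1} - \epsilon\lambda)^{2k'}$ for a suitable $k'$; the standard orthogonality $\sum_\lambda \lambda^m \equiv 0$ unless $(\ell-1)\mid m$ then isolates the central binomial coefficient $\binom{2k'}{k'}(-\epsilon)^{k'}$, which is a unit mod $\ell$ since $2k' < \ell$. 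You instead bypass the conic entirely: writing $a_j = 1 + \chi(1+\epsilon g^{2j})$, applying Euler's criterion $\chi(c) \equiv c^r$, and using orthogonality over the subgroup of squares yields $S_k \equiv r\binom{r}{k}\epsilon^{r-k}$ (or $r$ when $k=0$), again a unit because $r < \ell$. Your route is shorter and more self-contained, requiring no geometric input; the paper's route, on the other hand, is tailored to the geodesic/path formalism of the surrounding sections and is the template that gets reused in the Cartan case, where the parametrization of $\gamma^s_{(a,b)}$ by $\lambda$ plays the same role.
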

\begin{proof}
The above sum can be calculated as:
\begin{align}
  & \sum_{j=0} ^{r-1} D_{0,j} (g^{2k})^j \equiv \sum _{j=0} ^{r-1} a_j (g^{2k})^j 
  \equiv \sum_{j=0}^{r-1} \left( \sum_{x^2 \equiv 1 + \epsilon g^{2j} \pod \ell} 1 \right) (g^{2j})^k \notag\\
  \equiv  & \sum_{\substack{j=0,...,r-1\\ x=0,...,\ell-1 \\ x^2 \equiv 1 + \epsilon g^{2j} \pod \ell}} (g^{2j})^k 
  \equiv  2 \sum_{\substack{x=0,...,\ell-1\\ y = 1,\ldots,\ell-1 \\ x^2 \equiv 1 + \epsilon y^{2}\pod \ell}} \left(\frac{x^2 - 1}{\epsilon} \right)^k \label{nonzero}.
\end{align}
Now, we need to show that \eqref{nonzero} is non-zero modulo $\ell$ for every $0 \le k \le r-1$.

We can rewrite $(\frac{x^2 - 1}{\epsilon})$ as $y^2$ since $y=g^j$ for $0 \le j \le r-1$. The conic $x^2 \equiv 1 + \epsilon y^2 \pod \ell$ is parametrized by $x= \frac{a - \epsilon \lambda^2 b}{1 - \epsilon\lambda^2}, y= \frac{\lambda(a-b)}{1- \epsilon \lambda^2}$ from \eqref{equN}. Here, we compute $D_{i,j}$ for $i=0$ which corresponds to $m=1=(a-b)^2$.
Thus, we can rewrite \eqref{nonzero} as
\begin{align*}
  \sum_{\lambda =1} ^{\ell -1} \left(\frac{\lambda}{1- \epsilon \lambda^2} \right) ^{2k} 
  \equiv \sum_{\lambda = 1} ^{\ell -1} (\lambda^{-1} - \epsilon \lambda)^{-2k} 
  \equiv \sum_{\lambda =1} ^{\ell -1} (\lambda^{-1} - \epsilon \lambda)^{2k'},
\end{align*}
where $-2k' \equiv 2k \pod{\ell -1} $ and $0 \le 2k' \le \ell - 2$, hence $0 \le k' \le \frac{\ell -3}{2}$. Here, we have to consider two cases.

The first case is $k \ge 1$.  The sum of all terms except the constant terms will be zero modulo $\ell$. Therefore, we just have to compute the sum of the constant terms which is
\begin{align}
 \label{equationsum} \sum_{\lambda = 1} ^{\ell - 1} \frac{(2k')!}{k'! k'!} (-1)^{k'} \epsilon ^{k'} \equiv  \frac{(2k')!}{k'! k'!} (-1)^{k'+1} \epsilon^{k'} \pod{\ell},
\end{align}
which is non-zero modulo $\ell$, since $2k' < \ell$ for all values of $k'$, hence none of the terms of ~\eqref{equationsum} is a multiple of $\ell$, therefore it is non-zero modulo $\ell$.

The second case is when $k=0$, then the sum in \eqref{equationsum} becomes $\sum_{\lambda=1} ^{\ell -1} 1 \equiv -1 \not\equiv 0 \pod\ell$.
\end{proof}
This concludes the proof of Theorem ~\ref{main-N}.

\section{Cartan subgroup case}

In this section, we generalize Merel's conjecture to Cartan subgroups and give a proof by generalizing the methods in Section~\ref{sectionN}. A new feature is that the conjectural explicit intertwining operator is now a linear combination of double coset operators (rather than a single double coset operator) whose coefficients we are able to make explicit.

Define $\gamma_{(0, \infty)} := \f_{\ell} ^{\times} \sqrt{\epsilon}
\subseteq \C_{\ell}$, which can be thought of as a path in
$\C_{\ell}$ from $0$ to $\infty$. Given an ordered pair $(a, b)$,
there is a $g \in G$ such that $(a, b) = (g(0), g(\infty) )$, which
is unique up to multiplication on the left by $C$. Thus, we may
define $\gamma_{(a, b)} := g(\gamma_{(0, \infty)})$, which can be
thought as a path in $\C_{\ell}$ from $a$ to $b$ (see Figure~\ref{m3}).

Here, for each $s=1, \ldots, \ell-1$, we define the linear operator $H_s$ by:
\begin{align}
\label{main-Ceq} 
 H_s : \q[\Pr^1(\f_\ell) \times \Pr^1(\f_\ell) - \Delta] \longrightarrow \q[\C_\ell]&\\
 \notag (a, b)\longmapsto \sum_{x \in \gamma^{s}_{(a,b)}} x.
\end{align}

\begin{defn}
Define $\gamma^s _{(0, \infty)}$ to be $\left\{(\lambda s + \lambda
\sqrt{\epsilon}, 1)^{t}: \lambda \in \f_\ell^{\times}, s\in \f_\ell^{\times} \right\} \subseteq \C_\ell$. This is a path in $\C_\ell$ which is a line with slope $s$.
\end{defn}

By Lemma~\ref{g-matrix}, we know that $g = \begin{pmatrix} b & a \\ 1 & 1 \end{pmatrix}$, hence the path in $\C_\ell$ from $a$ to $b$ can be obtained as
\begin{equation}
\label{path}
 \gamma_{(a,b)}^{s} = g(\gamma_{(0,\infty)}^s) = g(\lambda s + \lambda \sqrt{\epsilon}, 1)^{t} = (bs \lambda + b \lambda \sqrt{\epsilon}+a, \lambda s + \lambda \sqrt{\epsilon}+1)^{t},
\end{equation}
which is represented by an equation defined by the next lemma.

\begin{lem}
\label{pathlem}
The quadratic equation
\begin{equation}
\label{equM} \left(x - \frac{a+b}{2} \right)^2 - \epsilon
\left(y - \frac{s(b-a)}{2\epsilon} \right)^2 =
\frac{(\epsilon - s^2)(a - b)^2 }{4 \epsilon}
\end{equation}
gives the path $\gamma^s_{(a, b)}$ with coordinates (see Figure~\ref{m3})
\begin{align*}
 & x= \frac{(b \lambda s + a)(\lambda s + 1) - b \lambda^2 \epsilon}{(\lambda s + 1)^2 - \lambda ^2 \epsilon},\\
 & y = \frac{\lambda (b-a)}{(\lambda s + 1)^2 - \lambda ^2 \epsilon }.
\end{align*}

\end{lem}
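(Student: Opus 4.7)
The plan is to follow the strategy of the proof of Lemma~\ref{geodesicircle}. First, apply the matrix $g = \begin{pmatrix} b & a \\ 1 & 1 \end{pmatrix}$ from Lemma~\ref{g-matrix} to a generic point $(\lambda s + \lambda\sqrt\epsilon, 1)^t$ of $\gamma^s_{(0,\infty)}$, which produces the element $(b\mu + a)/(\mu + 1) \in \f_{\ell^2}^\times$ with $\mu := \lambda s + \lambda\sqrt\epsilon$. Rationalize by multiplying numerator and denominator by the Galois conjugate $\lambda s + 1 - \lambda\sqrt\epsilon$ and read off the $1$- and $\sqrt\epsilon$-parts under the decomposition $\f_{\ell^2} = \f_\ell \oplus \sqrt\epsilon\,\f_\ell$ to obtain the stated parametric formulas for $x$ and $y$.

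For the conic equation, observe that setting $z = x + y\sqrt\epsilon$ and $u = \tfrac{a+b}{2} + \tfrac{s(b-a)}{2\epsilon}\sqrt\epsilon$, the left-hand side of~\eqref{equM} is precisely the $\f_{\ell^2}/\f_\ell$ norm $N(z - u) = (z-u)(\bar z - \bar u)$, where the bar denotes Galois conjugation. Since $z = (b\mu + a)/(\mu + 1)$, we have $z - u = \bigl((b-u)\mu + (a-u)\bigr)/(\mu + 1)$, so the claim reduces to showing
\[
N\bigl((b-u)\mu + (a-u)\bigr) \;=\; N(b-u)\cdot N(\mu + 1).
\]
A direct computation gives $N(b-u) = N(a-u) = \tfrac{(\epsilon - s^2)(a-b)^2}{4\epsilon}$. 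Using the identities $\mu + \bar\mu = 2\lambda s$, $u + \bar u = a + b$, and $u - \bar u = \tfrac{s(b-a)\sqrt\epsilon}{\epsilon}$, one then verifies the cross-term identity
\[
(b-u)(a-\bar u)\mu + (b-\bar u)(a-u)\bar\mu \;=\; N(b-u)\,(\mu + \bar\mu),
\]
which, combined with the equality $N(b-u) = N(a-u)$, yields the desired constancy of $N(z-u)$ and identifies the value with $\tfrac{(\epsilon - s^2)(a-b)^2}{4\epsilon}$.

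The main obstacle is bookkeeping in the conic verification: the simultaneous presence of $\sqrt\epsilon$, $s$, $\lambda$, and the parameters $a, b$ makes it easy to drop a sign or a factor of $\epsilon$. Working through norms and traces in $\f_{\ell^2}/\f_\ell$, rather than squaring the rational expressions for $x$ and $y$ directly, keeps the calculation manageable by separating the symmetric and antisymmetric combinations of $(b-u)(a-\bar u)$ and $(b-\bar u)(a-u)$ and matching them against the corresponding parts of $\mu$.
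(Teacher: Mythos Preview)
Your derivation of the parametric formulas for $x$ and $y$ is exactly what the paper does: apply $g=\begin{pmatrix} b & a \\ 1 & 1 \end{pmatrix}$ to $(\lambda s+\lambda\sqrt\epsilon,1)^t$, rationalize the resulting fraction in $\f_{\ell^2}$, and read off the $1$- and $\sqrt\epsilon$-components. On that point there is nothing to add.

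Where you diverge is in the verification of the conic~\eqref{equM}: the paper's proof simply stops after recording the parametrization and does not check the quadratic equation at all, leaving it implicit. Your norm argument fills this gap, and does so cleanly. Recognizing the left-hand side of~\eqref{equM} as $N_{\f_{\ell^2}/\f_\ell}(z-u)$ with $u=\tfrac{a+b}{2}+\tfrac{s(b-a)}{2\epsilon}\sqrt\epsilon$, and then reducing to the cross-term identity via $N(b-u)=N(a-u)$ and the relation $B\epsilon=As$ (in your notation $A=\tfrac{b-a}{2}$, $B=\tfrac{s(b-a)}{2\epsilon}$), is considerably tidier than substituting the rational expressions for $x$ and $y$ directly and expanding. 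The trade-off is that the paper's omission keeps the proof short, while your approach actually establishes both assertions of the lemma; the norm formulation also makes transparent why the ``center'' $u$ has the particular $\sqrt\epsilon$-component it does.
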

        
\begin{figure}
\centering
\begin{tikzpicture}[domain=-6:6]
    \draw[->] (-5,0 ) -- (6,0) node[right] {$\f_\ell$}; 
    \draw[->] (0,-3.5) -- (0,4) node[above] {$\f_\ell \sqrt\epsilon$};
    \draw(3,1)  circle (2cm) node [black,below left]{$O$};
    \coordinate (O) at (3,1);
    \def\radius{2cm}
    \draw (O) circle[radius=\radius];
    \fill (O) circle[radius=2pt] node[below left] {O};
    \coordinate (a) at (1.3,0);
    \fill (a) circle[radius=2pt] node[below left] {$a$};
    \coordinate (b) at (4.7,0);
    \fill (b) circle[radius=2pt] node[below right] {$b$};
    \draw(3,3) node[above]{$\gamma_{\left(a,b\right)}$};
\end{tikzpicture}
\caption{The path $\gamma_{\left(a,b\right)}$ in $\C_\ell$}
\label{m3}
\end{figure}   
        
\begin{proof}
Writing $g (\lambda s + \lambda \sqrt{\epsilon}, 1)^{t}$ as a fraction and then rationalizing it, we obtain:
\begin{align*}
\frac{(b \lambda s + a) + b \lambda \sqrt{\epsilon}}{(\lambda s+ 1) + \lambda \sqrt{\epsilon}} 
  =  \frac{(b \lambda s + a)(\lambda s + 1) - b \lambda ^2 \epsilon}{(\lambda s + 1)^2 - \lambda ^2 \epsilon} + \sqrt{\epsilon}\frac{\lambda (b -a)}{(\lambda s + 1)^2 - \lambda ^2 \epsilon}.
\end{align*}
Therefore, as $\gamma_{(a, b)}^s \in \f_{\ell^2}^{\times}$ we conclude $x,
y$ from the above expression are given by (see Figure~\ref{m3})
\begin{align*}
  &x = \frac{(b \lambda s + a)(\lambda s + 1) - b \lambda ^2 \epsilon}{(\lambda s + 1)^2 - \lambda ^2 \epsilon}, \\
  &y = \frac{\lambda (b -a)}{(\lambda s + 1)^2 - \lambda ^2 \epsilon}.
\end{align*}
\end{proof}

\subsection{Coordinates for $G/C$ and $G/C'$}~\\

We need more convenient coordinates to represent elements in (a certain
subset of) $\Pr^1(\f_\ell) \times \Pr^1(\f_\ell) - \Delta$ and
$\C_\ell$, where $\Pr^1(\f_\ell) \times \Pr^1(\f_\ell) - \Delta$ is in bijection with $G/C$ and $\C_\ell$ is in bijection with $G/C'$.

\begin{lem}
\label{lemt'}
Let $A = \f_\ell \times \f_\ell - \Delta$ and $S= \left\{(t, t'): t'
\neq 0 \right\}$. Then there
is a bijection between the sets $A$ and $S$ given by:
\begin{align*}
 & (a, b) \mapsto (a+b, a-b).
\end{align*}
\end{lem}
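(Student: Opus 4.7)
The claim is essentially a linear-algebra statement over $\f_\ell$, and the map $(a,b) \mapsto (a+b, a-b)$ is just a change of basis; since $\ell$ is odd, $2 \in \f_\ell^\times$, so the linear map has an inverse. The plan is simply to exhibit the inverse map explicitly, verify that it takes values in the correct set, and then confirm that both compositions are the identity.

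First I would observe that the proposed map is well-defined as a map $A \to S$: if $(a,b) \in A$, then by definition $a \neq b$, so $t' := a - b$ is a nonzero element of $\f_\ell$, whence $(a+b, a-b) \in S$. Next I would write down the candidate inverse $\phi : S \to A$ given by
\[
\phi(t, t') = \left( \tfrac{t+t'}{2}, \tfrac{t-t'}{2} \right),
\]
which is well-defined because $\ell$ is odd and hence $2 \in \f_\ell^\times$. Its image lies in $A$ because the difference of the two coordinates equals $t'$, which is nonzero by the definition of $S$.

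Finally I would check the two compositions. Starting from $(a,b) \in A$, we obtain $(a+b, a-b)$ and then
\[
\phi(a+b, a-b) = \left( \tfrac{(a+b)+(a-b)}{2}, \tfrac{(a+b)-(a-b)}{2} \right) = (a, b).
\]
Starting from $(t, t') \in S$, we obtain $\phi(t,t') = ((t+t')/2, (t-t')/2)$, and applying the original map gives $((t+t')/2 + (t-t')/2, (t+t')/2 - (t-t')/2) = (t, t')$. There is no genuine obstacle in this proof; the only point to keep in mind is that $\ell$ must be odd for the factor of $2$ to be invertible, which is part of the standing hypotheses of the paper.
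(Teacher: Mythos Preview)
Your proof is correct and follows exactly the same approach as the paper: the paper's proof simply records the inverse map $(t,t') \mapsto \left(\tfrac{t+t'}{2}, \tfrac{t-t'}{2}\right)$ without further comment, and you have supplied the routine verifications that the maps land in the correct sets and are mutual inverses.
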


\begin{proof}
The inverse map is given by $a= \frac{x+y}{2}$ and $b=\frac{x-y}{2}$.
\end{proof}

\begin{lem}
\label{lemT'}
Let $A'= \C_\ell $ and $S'= \left\{(T, T'): T' \neq 0 \right\}$. Then there is a bijection between
the sets $A'$ and $S'$ given by:
\begin{align*}
 & (z, \bar z) \mapsto (z+\bar z, z - \bar z).
\end{align*}
\end{lem}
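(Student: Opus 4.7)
The plan is to mimic exactly the proof of Lemma~\ref{lemt'}: exhibit an explicit inverse and verify that both compositions are the identity. Concretely, the proposed inverse sends $(T, T') \in S'$ to $(z, \bar z) \in A'$ with
\[
z = \frac{T + T'}{2}, \qquad \bar z = \frac{T - T'}{2},
\]
which is defined since $\ell$ is odd, so $2$ is invertible in $\f_\ell$.

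First, I would check that the forward map is well-defined. For $z = x + y\sqrt{\epsilon} \in \C_\ell$, by definition $y \in \f_\ell^\times$, so
\[
T = z + \bar z = 2x \in \f_\ell, \qquad T' = z - \bar z = 2y\sqrt{\epsilon} \ne 0,
\]
and hence $(T, T') \in S'$. For the reverse direction, viewing $T$ as an element of $\f_\ell$ and $T'$ as an element of $\sqrt{\epsilon}\,\f_\ell$ (the natural ambient for the antisymmetric coordinate, parallel to the role of $\f_\ell$ for $t'$ in Lemma~\ref{lemt'}), the element $z = (T+T')/2$ has $\sqrt{\epsilon}$-coefficient proportional to the coefficient of $T'$, which is nonzero by hypothesis, so $z \in \C_\ell$ and its conjugate is exactly $(T-T')/2$.

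Finally, the two compositions reduce to the arithmetic identities $(z+\bar z) + (z-\bar z) = 2z$ and $(z+\bar z) - (z-\bar z) = 2\bar z$, and conversely $\tfrac12\bigl((T+T') + (T-T')\bigr) = T$ and $\tfrac12\bigl((T+T') - (T-T')\bigr) = T'$, each of which is immediate. The only real subtlety — and the sole thing to be careful about — is the interpretation of the ambient space for $T'$: unlike Lemma~\ref{lemt'} where $t' = a - b \in \f_\ell$, here $T' = z - \bar z$ lies in $\sqrt{\epsilon}\,\f_\ell$ rather than $\f_\ell$, so $S'$ must be read in that ambient. Once this identification is fixed, both maps are manifestly inverse to each other, and the lemma follows.
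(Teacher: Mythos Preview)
Your proposal is correct and follows essentially the same approach as the paper: both proofs simply exhibit an explicit inverse map. The paper's one-line proof writes the inverse as $z = \tfrac{T}{2} + \sqrt{\epsilon}\,\tfrac{T'}{2}$ (implicitly treating $T'$ as an element of $\f_\ell$), whereas you write it as $z = \tfrac{T+T'}{2}$ with $T' \in \sqrt{\epsilon}\,\f_\ell$; your added care about the ambient space for $T'$ is well placed, since the paper is slightly loose on this point, but the underlying argument is the same.
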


\begin{proof}
The inverse map is given by $z= \frac{z+ \bar z}{2} + \sqrt\epsilon \frac{z - \bar z}{2}$ and $\bar z = \frac{z + \bar z}{2} -\sqrt\epsilon \frac{z - \bar z}{2}$.
\end{proof}

\subsection{Proof of Theorem~\ref{main-C}}~\\

By Lemma~\ref{doublecosetoperatorC}, $\psi$ is a $\q[G]$-module homomorphism. To prove Theorem~\ref{main-C}, it suffices to prove that the restriction
\begin{equation}
\label{theorem2}
\psi_{\mid \q[A]} : \q[A] \rightarrow \q[\C_\ell],
\end{equation}
is an isomorphism of $\q$-vector spaces.

Using the bijections given by Lemma~\ref{lemt'} and Lemma~\ref{lemT'}, to prove \eqref{theorem2} is equivalent to proving that
\begin{equation*}
\psi: \q[S] \rightarrow \q[S'],
\end{equation*}
is an isomorphism of $\q$-vector spaces, where $\psi$ is the same map as $\psi_{\mid{\q[A]}}$ under identifications given by two bijections $A \leftrightarrow S$ and $\C_\ell \leftrightarrow S'$.

Recall the equation giving the path $\gamma^s_{(a,b)}$ from $a$ to $b$ is
\begin{equation*}
\left(x - \frac{a+b}{2} \right)^2 - \epsilon \left(y - \frac{s(b-a)}{2\epsilon} \right)^2 = \frac{(\epsilon - s^2)(a-b)^2}{4 \epsilon},
\end{equation*}
by Lemma \ref{pathlem}. By the bijections $P \leftrightarrow E$ and $P' \leftrightarrow E'$, this equation becomes
\begin{align}
\notag & \left(x - \frac{a+b}{2} \right)^2 - \epsilon \left(y - \frac{s(b-a)}{2\epsilon} \right)^2 = \frac{(\epsilon - s^2)(a-b)^2}{4 \epsilon} \\
\label{newquad} \iff & (T-t)^2 = (a-b)^2 + 4\epsilon y^2 + 4sy(b-a),
\end{align}
in the new coordinates from Lemma~\ref{lemt'} and Lemma~\ref{lemT'}.
Hence, the matrix of $H_s$ restricted to $\q[S]$ with respect to the basis $S$ is given by
\begin{equation}
  a_{(t,t'),(T,T')}(s) =
  \begin{cases}
    1 & \text{  if  } (T-t)^2\equiv t'^2 + 4 \epsilon T'^2 + 4sT't' \pod \ell ,\\
    0 & \text{  otherwise}.
  \end{cases}
\end{equation}
The above matrix is an $(\ell-1) \times (\ell-1)$ matrix $X_{t',T'}(s)$, with entries being the $\ell\times \ell$ matrices $(X_{t',T'})_{t,T}$(s) given by
\begin{equation*}
 (X_{t',T'})_{t,T}(s) = \begin{cases}
  1 & \text{  if  } (T-t)^2\equiv t'^2 + 4 \epsilon T'^2 + 4sT't' \pod \ell ,\\
0 & \text{  otherwise}.\end{cases}
\end{equation*}
Let $X$ be the matrix which permutes columns of the $\ell \times \ell$ identity matrix according to the cycle $(1\,2\, 3\cdots \ell)$.
\begin{lem}
$X_{t',T'} (s) = \sum_{v^2 \equiv t'^2 + 4 \epsilon T'^2 + 4sT't' \pod \ell} X^v$.
\end{lem}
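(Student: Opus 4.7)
The plan is to imitate the proof of the analogous statement $D_{m,M} = \sum_{x^2 \equiv m+M} D^x$ from Section~\ref{sectionN}. The key observation is that the defining condition for $(X_{t',T'})_{t,T}(s)$ to equal $1$ is precisely $(T-t)^2 \equiv Q \pod \ell$, where I set $Q := t'^2 + 4\epsilon T'^2 + 4sT't'$. So the structure is identical to the normalizer case; only the quantity $Q$ replaces $m+M$.

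First I would split into cases based on the Legendre symbol of $Q$. If $Q$ is a non-square in $\f_\ell$, then there is no $v \in \f_\ell$ with $v^2 \equiv Q$, so the right-hand side is the empty sum $0$; simultaneously, no pair $(t,T)$ satisfies $(T-t)^2 \equiv Q$, so $X_{t',T'}(s)$ is the zero matrix, and the two sides agree. If $Q$ is a square, write $Q \equiv v_0^2 \pod \ell$. Then $(T-t)^2 \equiv Q$ iff $T \equiv t \pm v_0 \pod \ell$.

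Next I would unpack the matrix $X^v$. By construction $X$ acts on the standard basis indexed by $\f_\ell$ by the shift $e_t \mapsto e_{t-1}$ (or its inverse, depending on convention), so $X^v$ has a $1$ in position $(t,T)$ exactly when $T \equiv t + v \pod \ell$. Therefore $X^{v_0} + X^{-v_0}$ has a $1$ in position $(t,T)$ iff $T \equiv t + v_0$ or $T \equiv t - v_0$, which matches $X_{t',T'}(s)$ in the generic case $v_0 \not\equiv 0$. In the degenerate case $Q \equiv 0$, the only $v$ with $v^2 \equiv Q$ is $v=0$, the condition $(T-t)^2 \equiv 0$ forces $T=t$, and $X^0 = I$, so again the two sides agree.

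Putting the three cases together yields $X_{t',T'}(s) = \sum_{v^2 \equiv Q \pod \ell} X^v$, as claimed. I do not expect any real obstacle here; the entire argument is a direct translation of the $D_{m,M}$ lemma, with the only care needed being the bookkeeping of the $v=0$ case (which contributes a single term $X^0 = I$ rather than the two terms $X^{\pm v_0}$ one would expect from the generic formula).
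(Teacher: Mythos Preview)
Your proposal is correct and follows essentially the same approach as the paper's proof: both split according to whether $Q = t'^2 + 4\epsilon T'^2 + 4sT't'$ is a square in $\f_\ell$ and then identify the nonzero entries of $X_{t',T'}(s)$ with those of $X^{v_0} + X^{-v_0}$. Your version is in fact slightly more careful, since you explicitly treat the degenerate case $Q \equiv 0$ (where $v_0 = -v_0$ and the sum reduces to the single term $X^0 = I$), a point the paper's proof passes over silently.
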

\begin{proof}
If $t'^2 + 4 \epsilon T'^2 + 4sT't'$ is not square in $\f_\ell$, then $X_{t,T}(s)$ is a zero matrix due to $0$ entries. Therefore, $X_{t,T}(s) = \sum_{v^2 \equiv t'^2 + 4 \epsilon T'^2 + 4sT't' \pod \ell} X^v = 0$.

If $t'^2 + 4 \epsilon T'^2 + 4sT't'=v^2$ is a square in $\f_\ell$, then $T - t=\pm v$ and
\begin{align*}
(X_{t',T'})_{t,T}(s) = \begin{cases} 1 \quad T = t \pm v ,\\ 0 \quad \text{otherwise}. \end{cases}
\end{align*}
In this case, $X_{t',T'}(s)$ coincides with $\sum_{v^2 \equiv t'^2+4\epsilon T'^2+4sT't' \pod\ell} X^v$.
\end{proof}

Arguing similarly as in the discussion preceeding Lemma~\ref{ramified}, we obtain that $X_{t',T'}(s) = \sum_{v^2 \equiv t'^2 + 4 \epsilon T'^2 + 4sT't' \pod \ell} 1$. We label $t', T'$ as $t' = g^i$ and $T'= g^j$ for $0 \le i, j \le \ell -1$, and $(T-t)^2 = v^2$. This gives us a new matrix denoted by $X_{i,j}(s)$ which is given by
\begin{equation*}
 X_{i,j}(s) =\sum_{v^2 \equiv g^{2i} + 4 \epsilon g^{2j} - 4s g^{i+j}\pod\ell} 1 \pod{\ell}.
\end{equation*}

\begin{lem}
\label{lemma29} The matrix $X_{i,j}(s)$ is a $(\ell - 1) \times (\ell -1)$ circulant matrix.
\end{lem}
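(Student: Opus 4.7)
The plan is to mimic the proof of Lemma~\ref{equationmatrix} essentially verbatim. Since $X_{i,j}(s)$ is indexed by $i, j \in \{0, 1, \ldots, \ell-2\}$ via the labelling $t' = g^i$ and $T' = g^j$, where $g$ is a primitive root modulo $\ell$, showing that the outer $(\ell-1) \times (\ell-1)$ matrix is circulant amounts to showing that its entry depends only on $j - i \pmod{\ell - 1}$. Equivalently, it suffices to verify the shift identity
\[
X_{i,j}(s) \equiv X_{i-1,j-1}(s) \pmod{\ell}
\]
for every $i, j$, with indices taken cyclically modulo $\ell - 1$.

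To verify this identity, I would factor $g^2$ out of the right-hand side of the defining congruence for $X_{i,j}(s)$, namely
\[
g^{2i} + 4\epsilon g^{2j} - 4s\, g^{i+j} \;=\; g^{2}\bigl(g^{2(i-1)} + 4\epsilon g^{2(j-1)} - 4s\, g^{(i-1)+(j-1)}\bigr),
\]
and then apply the substitution $v = g w$, which is a bijection on $\f_\ell$ because $g \in \f_\ell^{\times}$. Under this change of variable, the congruence $v^2 \equiv g^{2i} + 4\epsilon g^{2j} - 4s\, g^{i+j} \pmod{\ell}$ becomes $g^2 w^2 \equiv g^2\bigl(g^{2(i-1)} + 4\epsilon g^{2(j-1)} - 4s\, g^{(i-1)+(j-1)}\bigr) \pmod{\ell}$, and cancelling the nonzero factor $g^2$ yields the congruence that defines $X_{i-1,j-1}(s)$. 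Hence the two solution sets are in bijection and have the same cardinality, so $X_{i,j}(s) = X_{i-1,j-1}(s)$ and the outer matrix is circulant.

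This is essentially a one-line calculation and I do not anticipate any real obstacle. The only minor book-keeping is that the shift $i \mapsto i-1$ must be read cyclically modulo $\ell - 1$ (the order of $g$), which is compatible with the index set of a $(\ell-1) \times (\ell-1)$ matrix. It is also worth observing that the bijection $v \mapsto g v$ is independent of the parameter $s$, so the circulant property holds uniformly for every $s \in \f_\ell^\times$ — a feature that will matter later when the matrices $X_{i,j}(s)$ are combined linearly in $s$ to assemble the full intertwining operator $\psi$.
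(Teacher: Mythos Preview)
Your proposal is correct and follows exactly the same approach as the paper: factor out $g^2$ from the defining congruence and use the bijection $v \mapsto g^{-1}v$ to match solution counts, yielding $X_{i,j}(s) = X_{i-1,j-1}(s)$. Your remark that the index shift is taken modulo $\ell-1$ is in fact more precise than the paper's own phrasing.
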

\begin{proof}
This follows since
\begin{align*}
& X_{i,j}(s) \equiv \sum_{v^2 \equiv g^{2i}+4\epsilon g^{2j}-4sg^{i+j}\pod \ell} 1 \equiv \sum_{v ^2 \equiv g^2(g^{2(i-1)}+4\epsilon g^{2(j-1)}-4sg^{i+j -2})\pod\ell} 1 \equiv X_{i-1,j-1}(s) \pod\ell,
\end{align*}
where the indices are taken modulo $\ell$. Remark, $X_{0, j}(s)= a_j(s)$ is equal to the number of solutions of $v^2 \equiv 1 + 4 \epsilon g^{2j} - 4sg^j \pod \ell$.
\end{proof}

Let $a_j(s) = X_{0,j}(s)=c_j$, and $\omega = e^{\frac{2 \pi i}{\ell - 1}}$.

\begin{lem}
Let $\vartheta$ be a prime above $\ell$ in $\Z[\omega]$ where $\omega = e^{\frac{2\pi i}{\ell-1}}$. Then $\omega \equiv g \pod \vartheta$, where $g$ is a primitive root modulo $\ell$.
\end{lem}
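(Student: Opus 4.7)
The plan is to repeat almost verbatim the argument given for the analogous lemma in Section~\ref{sectionN} (where $\omega$ was a primitive $r$-th root of unity with $r=(\ell-1)/2$), now with $\omega$ a primitive $(\ell-1)$-th root of unity. The key point is that $\f_\ell^{\times}$ is cyclic of order exactly $\ell-1$, so it contains a full set of $(\ell-1)$-th roots of unity.

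First I would identify the residue field. Let $\mathcal{O} = \Z[\omega]$ be the ring of integers of $\Q(\omega)$, which is the $(\ell-1)$-th cyclotomic field. Since $\gcd(\ell, \ell-1) = 1$, the prime $\ell$ is unramified in $\mathcal{O}$. Moreover $\f_\ell^\times$ already contains a primitive $(\ell-1)$-th root of unity, so the order of $\ell$ modulo $\ell-1$ is $1$, and hence $\ell$ splits completely in $\mathcal{O}$. Thus for any prime $\vartheta \mid \ell$ of $\mathcal{O}$, the residue field $\mathcal{O}/\vartheta$ is isomorphic to $\f_\ell$.

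Next, consider the factorization
\begin{equation*}
  x^{\ell-1} - 1 = \prod_{k=0}^{\ell-2}(x - \omega^k) \quad \text{in } \mathcal{O}[x].
\end{equation*}
Reducing modulo $\vartheta$ yields the same factorization in $\f_\ell[x]$, with roots $\bar\omega^{k}$ for $0 \le k \le \ell-2$. On the other hand, every element of $\f_\ell^\times$ is a root of $x^{\ell-1}-1$, so the roots $\bar\omega^k$ are exactly the $\ell-1$ distinct elements of $\f_\ell^\times$. This forces $\bar\omega$ to have multiplicative order exactly $\ell-1$ in $\f_\ell^\times$, i.e., $\bar\omega$ is a primitive root modulo $\ell$. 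Setting $g := \bar\omega$ gives $\omega \equiv g \pmod{\vartheta}$ with $g$ a primitive root modulo $\ell$, as claimed.

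I do not expect any real obstacle: the argument is a direct transcription of Lemma~\ref{ramified}'s analogue from Section~\ref{sectionN}, the only change being that the cyclotomic index is $\ell-1$ rather than $(\ell-1)/2$, and the corresponding roots in $\f_\ell$ form all of $\f_\ell^\times$ rather than just the subgroup of squares. The mildest subtlety is ensuring that $\bar\omega$ has full order $\ell-1$ and not merely a proper divisor, but this is automatic from the fact that the reductions $\bar\omega^k$ for $0 \le k \le \ell-2$ are pairwise distinct (as the original $\omega^k$ are distinct roots of the separable polynomial $x^{\ell-1}-1$, which remains separable mod $\ell$).
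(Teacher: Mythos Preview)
Your proof is correct and follows essentially the same approach as the paper's: identify the residue field $\mathcal{O}/\vartheta$ with $\f_\ell$, observe that $x^{\ell-1}-1$ splits there with the distinct roots $\bar\omega^k$, and conclude that $\bar\omega$ generates $\f_\ell^\times$. The only difference is cosmetic: you justify $\mathcal{O}/\vartheta \cong \f_\ell$ by noting that $\ell$ splits completely (since $\ell \equiv 1 \pmod{\ell-1}$), whereas the paper cites \cite[Proposition~10.3]{Neukirch}.
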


\begin{proof}
Let $\mathcal{O}= \z[\omega]$ be the maximal order of $\q(\omega)$. The residue field of $\vartheta$ is $\mathcal{O}/\vartheta \cong \f_\ell$ by \cite[Proposition 10.3]{Neukirch}. Furthermore, since the polynomial $x^{\ell -1} - 1$ splits in $\mathcal{O}/\vartheta[x] \cong \f_\ell[x]$ with distinct roots $\omega_1=\omega, \omega_2=\omega^2, \cdots,\omega_{\ell-1}=\omega^{\ell-1} = 1$, we have that every root of $x^{\ell-1}-1$ in $\f_\ell$ is a power of $\omega \in \mathcal{O}/\vartheta \cong \f_\ell$. Hence, $\omega \cong g \pod\vartheta$ for some positive root $g$ modulo $\ell$.
\end{proof}

The eigenvalues of $H_s$ modulo $\vartheta$ can be calculated as
\begin{align}
\label{proof-of-thm-2}
\notag & \sum_{j = 0 } ^{\ell - 2} a_j(s) \omega ^{kj} \equiv_{\vartheta} \sum_{j=0} ^{\ell-2} a_j(s) (g^{k})^j \equiv \sum_{j=0} ^{\ell-2} \left(\sum_{v ^2 \equiv 1 + 4 \epsilon g^{2j} - 4s g^j\pod\ell} 1\right)(g^{k})^j \\
\equiv & \sum_{j=0} ^{\ell-2} \sum_{v ^2 \equiv 1 + 4 \epsilon g^{2j} - 4s g^j\pod\ell} g^{kj}
\equiv \sum_{\lambda =1}^{\ell-1} y(\lambda)^{k}
 \equiv \sum_{\lambda =1}^{\ell-1} \frac{\lambda^k (a-b)^k}{\left((\lambda s + 1)^2 - \lambda^2 \epsilon \right)^k} \pod \ell.
\end{align}
Here, $a_j(s) = X_{0,j}(s)$, which corresponds to $m=1=(a-b)^2$.

We now consider a linear combination $\sum_{s=1} ^{\ell-1} \alpha_s H_s : \q[\Pr^1(\f_\ell) \times \Pr^1(\f_\ell) - \Delta] \rightarrow \q[\C_\ell]$ of the maps $H_s$. Note that a linear combination of circulant matrices is circulant. The eigenvalue of $\sum_{s=1} ^{\ell-1} \alpha_s H_s$ is thus given by $\sum_{j=0} ^{\ell-2} b_j \omega ^{kj}$, where $b_j = \sum_{s=1} ^{\ell-1} \alpha _s a_j
(s)$. Then, we have that
\begin{align*}
 & \sum_{j=0} ^{\ell - 2} b_j \omega ^{kj} = \sum_{j=0} ^{\ell - 2} \left( \sum_{s=1} ^{\ell-1} \alpha_s a_j(s) \right) \omega^{kj} = \sum_{s=1} ^{\ell-1} \alpha_s \sum_{j=0} ^{\ell - 2} a_j(s) \omega^{kj} \\
 \equiv_{\vartheta} & \sum_{s=1} ^{\ell-1} \alpha_s \sum_{\lambda=1}^{\ell-1} y(\lambda, s)^k = \sum_{\lambda=1}^{\ell-1} \sum_{s=1} ^{\ell-1} \alpha_s y(\lambda, s)^k \\
 = & \sum_{s=1} ^{\ell-1} \alpha_s \sum_{\lambda=1}^{\ell-1} \left(\frac{\lambda}{(\lambda s + 1)^2 - \epsilon \lambda ^2} \right)^{k}. \\
\end{align*}

\begin{lem}
\label{lemA}
Let $\alpha_s \equiv 1\pod\ell$ for $s \in \f_\ell^\times$, then the sum
\begin{align}
 \sum_{s=1} ^{\ell-1} \alpha_s \sum_{\lambda=1}^{\ell-1} \left(\frac{\lambda}{(\lambda s + 1)^2 - \epsilon \lambda ^2} \right)^{k},
\end{align}
is non-zero modulo $\ell$ for $k$ even.
\end{lem}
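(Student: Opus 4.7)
The plan is to exploit the hypothesis $\alpha_s \equiv 1 \pod\ell$ to rewrite the target sum, modulo $\ell$, as
\[
T_k \equiv \sum_{s=1}^{\ell-1}\sum_{\lambda=1}^{\ell-1}\left(\frac{\lambda}{(\lambda s+1)^2 - \epsilon\lambda^2}\right)^{k} \pod{\ell},
\]
and then reduce it by a change of variables to the sum $\sum_{\lambda=1}^{\ell-1}\bigl(\tfrac{\lambda}{1-\epsilon\lambda^2}\bigr)^{k}$ already shown non-zero modulo $\ell$ in the proof of Theorem~\ref{main-N}.

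First I would perform the substitution $\mu = \lambda s$, which is a bijection on $\f_\ell^\times$ for each fixed $\lambda \in \f_\ell^\times$, to get
\[
T_k \equiv \sum_{\lambda=1}^{\ell-1}\lambda^{k}\sum_{\mu=1}^{\ell-1}\frac{1}{((\mu+1)^2 - \epsilon\lambda^2)^{k}} \pod{\ell}.
\]
Next I would set $w = \mu+1$, so $w$ ranges over $\f_\ell\setminus\{1\}$, and split off the missing $w=1$ term:
\[
T_k \equiv \sum_{\lambda=1}^{\ell-1}\lambda^{k}\left[\sum_{w\in\f_\ell}\frac{1}{(w^2 - \epsilon\lambda^2)^{k}} \;-\; \frac{1}{(1-\epsilon\lambda^2)^{k}}\right] \pod{\ell}.
\]
Since $\epsilon$ is a non-square in $\f_\ell$, the denominators $w^2 - \epsilon\lambda^2$ are non-zero for $\lambda \in \f_\ell^\times$, so the inner sum over all $w$ is well defined. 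Substituting $w = \lambda y$ factors out the dependence on $\lambda$:
\[
\sum_{w\in\f_\ell}\frac{1}{(w^2-\epsilon\lambda^2)^{k}} \;=\; \frac{1}{\lambda^{2k}}\underbrace{\sum_{y\in\f_\ell}\frac{1}{(y^2-\epsilon)^{k}}}_{=:A}.
\]

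Therefore
\[
T_k \equiv A\sum_{\lambda=1}^{\ell-1}\lambda^{-k} \;-\; \sum_{\lambda=1}^{\ell-1}\left(\frac{\lambda}{1-\epsilon\lambda^2}\right)^{k} \pod{\ell}.
\]
For $1\le k\le \ell-2$ we have $(\ell-1)\nmid k$, so $\sum_{\lambda\in\f_\ell^\times}\lambda^{-k}\equiv 0\pod\ell$ and the first term drops out, giving
\[
T_k \equiv -\sum_{\lambda=1}^{\ell-1}\left(\frac{\lambda}{1-\epsilon\lambda^2}\right)^{k}\pod{\ell}.
\]
When $k=2k'$ is even with $1\le k'\le (\ell-3)/2$, this is exactly the sum that was expanded in Section~\ref{sectionN} via the binomial theorem; the only surviving term in $\lambda$ is the constant one, whose contribution is $-\binom{2k'}{k'}(-\epsilon)^{k'}\not\equiv 0\pod\ell$ because $2k'<\ell$. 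The boundary case $k=0$ is handled directly: $T_0 = (\ell-1)^2 \equiv 1\pod\ell$, which is non-zero.

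The only mildly delicate step is the reduction argument itself, specifically the verification that the added diagonal of all $w\in\f_\ell$ terms has no poles (guaranteed by $\epsilon$ being a non-square) and that the $\lambda^{-k}$ power-sum vanishes in the relevant range. Once these are in place, the non-vanishing follows immediately from the analogous computation already carried out for Theorem~\ref{main-N}, and no new arithmetic input is required.
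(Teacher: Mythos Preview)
Your argument is correct and takes a genuinely different route from the paper's proof. The paper proceeds by a direct trinomial expansion: after replacing the exponent $k$ by $k'$ with $k\equiv -k'\pmod{\ell-1}$, it expands $\bigl(\lambda(s^2-\epsilon)+2s+\lambda^{-1}\bigr)^{k'}$, extracts the constant term in $\lambda$, and then sums the resulting polynomial in $s$ over $s\in\f_\ell^\times$, observing that the only surviving monomial occurs at $i=k'/2$. Your approach instead makes the double sum collapse: the substitutions $\mu=\lambda s$ and $w=\mu+1$ decouple $s$ from the inner summand, and adding and subtracting the term $w=1$ lets you factor the full $w$-sum through $w=\lambda y$ to pull out a $\lambda^{-2k}$. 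This reduces everything to $-\sum_{\lambda}\bigl(\lambda/(1-\epsilon\lambda^2)\bigr)^{k}$, which is \emph{exactly} the sum already proved non-zero modulo $\ell$ in the normalizer case (Section~\ref{sectionN}). So your proof makes explicit a direct link between Lemma~\ref{lemA} and the key computation behind Theorem~\ref{main-N}, whereas the paper's proof is an independent self-contained calculation.

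One small correction: when you write the surviving contribution as $-\binom{2k'}{k'}(-\epsilon)^{k'}$ with $k'=k/2$, that is not literally the value. The binomial expansion in Section~\ref{sectionN} is applied to $(\lambda^{-1}-\epsilon\lambda)^{2k''}$ only after the Fermat substitution $-k\equiv 2k''\pmod{\ell-1}$, so the actual constant is $-\binom{\ell-1-k}{(\ell-1-k)/2}(-\epsilon)^{(\ell-1-k)/2}$. This does not affect the argument, since $\ell-1-k<\ell$ keeps the binomial coefficient a unit modulo $\ell$; just adjust the stated formula.
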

\begin{proof}

In the case $k = 0$, we cannot use a binomial expansion so we  perform a direct computation:
\begin{align}
\label{casezero}
 & \sum_{s=1} ^{\ell-1} \alpha_s \sum_{\lambda=1}^{\ell-1} \left(\frac{\lambda}{(\lambda s + 1)^2 - \epsilon \lambda ^2} \right)^{k} \equiv \sum_{s=1}^{\ell-1} \alpha_s (\ell-1) \notag \\
\equiv & (\ell-1) \sum_{s=1} ^{\ell-1} \alpha_s \equiv (\ell-1)\sum_{s=1} ^{\ell-1} 1 \equiv (\ell-1)(\ell-1) \equiv 1 \pod \ell.
\end{align}
If $k > 0$, then choose $k' \in \mathbb{N}$ such that $k \equiv -k' \pod {\ell-1}$, and $1 \le k' \le \ell -2$. Then
\begin{align*}
 & \sum_{s=1} ^{\ell-1} \alpha_s \sum_{\lambda=1}^{\ell-1} \left(\frac{(\lambda s+ 1)^2 - \epsilon \lambda^2}{\lambda} \right)^{-k} \\
 \equiv & \sum_{s=1} ^{\ell-1} \alpha_s \sum_{\lambda=1}^{\ell-1} \left(\frac{(\lambda s + 1)^2 - \epsilon \lambda^2}{\lambda} \right)^{k'}  \pod{\ell},
\end{align*}
where
\begin{align*}
 & \left( \frac{(\lambda s + 1)^2 - \epsilon \lambda^2}{\lambda} \right)^{k'} \\
 = & \left( \frac{\lambda ^2 s^2 + 2 \lambda s + 1 - \epsilon \lambda^2}{\lambda}\right)^{k'}  \\
 = & (\lambda s^2 + 2s + \lambda^{-1} - \epsilon \lambda)^{k'} \\
 = & (\lambda (s^2 - \epsilon) + 2s + \lambda^{-1})^{k'}.
\end{align*}
Here, we just need the constant terms of $\left(\frac{(\lambda s +1)^2 - \epsilon \lambda^2}{\lambda} \right)^{k'}$ as the other terms are powers of $\lambda$, and the sum of these powers is zero modulo $\ell$. Now, 
\begin{align*}
 & \text{constant term of } (\lambda (s^2 - \epsilon) + 2s + \lambda^{-1})^{k'} \\
 \equiv  & \sum_{i=0} ^{\frac{k'}{2}} \frac{k' !}{i! i! (k' - 2i)!}(2s)^{k' - 2i}(s^2 - \epsilon)^i \pod{\ell} \quad \text{for $k'$ even}.
\end{align*}
Thus, we obtain that
\begin{align*}
& \sum_{s=1} ^{\ell-1} \alpha_s \sum_{\lambda=1}^{\ell-1} \left(\frac{(\lambda s+ 1)^2 - \epsilon \lambda^2}{\lambda} \right)^{-k} \equiv      & \sum_{s=1} ^{\ell-1} \alpha_s \sum_{i=0}^{\myfloor{\frac{k'}{2}}}\frac{k' !}{i! i! (k' - 2i)!}(2s)^{k' - 2i}(s^2 - \epsilon)^i \pod{\ell}.
\end{align*}

If $\alpha_s \equiv 1\pod\ell$ for $s = 1, \ldots, \ell-1$, then for $k' > 0$ even, we have that
\begin{align*}
& \sum_{s=1} ^{\ell-1} \alpha_s \sum_{i=0}^{\myfloor{\frac{k'}{2}}} \frac{k' !}{i! i! (k' - 2i)!}(2s)^{k' - 2i}(s^2 - \epsilon)^i \\
\equiv &  \sum_{s=1} ^{\ell-1} \sum_{i=0}^{\frac{k'}{2}} \frac{k' !}{i! i! (k' - 2i)!}(2s)^{k' - 2i}(s^2 - \epsilon)^i \\
\equiv &   \sum_{i=0}^{\frac{k'}{2}} \sum_{s=1} ^{\ell-1} \frac{k' !}{i! i! (k' - 2i)!}(2s)^{k' - 2i}(s^2 - \epsilon)^i \\
\equiv & \epsilon^{\frac{k'}{2}} \frac{k'!}{\frac{k'}{2}! \frac{k'}{2}!} \not\equiv 0 \pod{\ell}.
\end{align*}

The last equality holds because the only power of $s$ whose exponent is divisible by $\ell-1$ happens when $i = k'/2$. This proves the lemma.
\end{proof}

\begin{lem}
\label{lemB}
Let $\alpha_s \equiv1 \pod\ell$ for $s \in \f_\ell^\times$, then the sum
\begin{align}
 \sum_{s=1} ^{\ell-1} \alpha_s \sum_{\lambda=1}^{\ell-1} \left(\frac{\lambda}{(\lambda s + 1)^2 - \epsilon \lambda ^2} \right)^{k},
\end{align}
is equal to zero modulo $\ell$ for $k$ odd.
\end{lem}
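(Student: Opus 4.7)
The plan is to mirror the proof of Lemma~\ref{lemA} almost verbatim, diverging only at the concluding parity step. As in that proof, I would let $k' \in \N$ be determined by $k \equiv -k' \pod{\ell-1}$ with $1 \le k' \le \ell-2$; the identity
\begin{equation*}
\left(\frac{(\lambda s+1)^2 - \epsilon\lambda^2}{\lambda}\right)^{k'} = \bigl(\lambda(s^2-\epsilon)+2s+\lambda^{-1}\bigr)^{k'}
\end{equation*}
together with the vanishing of $\sum_{\lambda=1}^{\ell-1}\lambda^n$ modulo $\ell$ whenever $(\ell-1)\nmid n$ reduces the lemma to showing that
\begin{equation*}
\sum_{s=1}^{\ell-1}\alpha_s \sum_{i=0}^{\myfloor{k'/2}} \frac{k'!}{i!\,i!\,(k'-2i)!}(2s)^{k'-2i}(s^2-\epsilon)^i \equiv 0 \pod{\ell}.
\end{equation*}
This whole chain of manipulations is exactly what is done in Lemma~\ref{lemA} and does not depend on the parity of $k'$; the constant-in-$\lambda$ extraction works because the exponent $a-c$ of $\lambda$ produced by a multinomial term satisfies $|a-c| \le k' \le \ell-2 < \ell-1$.

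Next I would use the hypothesis $\alpha_s \equiv 1 \pod{\ell}$ to drop the $\alpha_s$ factors and then expand $(s^2-\epsilon)^i$ binomially. Each resulting monomial in $s$ has exponent of the form $k'-2i+2j$ with $0 \le j \le i$. Since $\ell$ is odd, $\ell-1$ is even, and since $k$ is odd, the congruence $k' \equiv -k \pod{\ell-1}$ forces $k'$ to be odd as well, so every such exponent is odd. Then the standard identity $\sum_{s=1}^{\ell-1} s^n \equiv 0 \pod{\ell}$ for $(\ell-1)\nmid n$ applies in particular to every positive odd $n$, so each monomial vanishes after summing over $s$, completing the proof.

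The main obstacle is essentially notational: tracking that $k'$ inherits the odd parity of $k$ (which requires $\ell-1$ even, hence $\ell$ odd, a standing assumption) and isolating where $\alpha_s \equiv 1$ is used—namely, to turn the weighted sum into an ordinary one whose terms are odd powers of $s$. Without this hypothesis, the twisted sums $\sum_s \alpha_s s^n$ need not vanish for odd $n$, so $\alpha_s \equiv 1$ is genuinely essential. No combinatorial input beyond that of Lemma~\ref{lemA} is required.
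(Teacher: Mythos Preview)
Your proposal is correct and follows essentially the same approach as the paper: pass to the exponent $k'\equiv -k\pod{\ell-1}$, extract the $\lambda$-constant term of $(\lambda(s^2-\epsilon)+2s+\lambda^{-1})^{k'}$ via the multinomial expansion, set $\alpha_s\equiv 1$, and then observe that for odd $k'$ no resulting power of $s$ has exponent divisible by $\ell-1$, so the $s$-sum vanishes. Your parity argument (all exponents $k'-2i+2j$ are odd, hence not divisible by the even number $\ell-1$) is exactly the mechanism behind the paper's terser remark that ``there are no powers of $s$ whose exponent is divisible by $\ell-1$.''
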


\begin{proof}
We choose $k' \in \mathbb{N}$ such that $k \equiv -k' \pod {\ell-1}$, and $1 \le k' \le \ell -2$. Then
\begin{align*}
 & \sum_{s=1} ^{\ell-1} \alpha_s \sum_{\lambda=1}^{\ell-1} \left(\frac{(\lambda s+ 1)^2 - \epsilon \lambda^2}{\lambda} \right)^{-k} \\
 \equiv & \sum_{s=1} ^{\ell-1} \alpha_s \sum_{\lambda=1}^{\ell-1} \left(\frac{(\lambda s + 1)^2 - \epsilon \lambda^2}{\lambda} \right)^{k'} \pod{\ell}.
\end{align*}
Hence, we get
\begin{align*}
 & \left( \frac{(\lambda s + 1)^2 - \epsilon \lambda^2}{\lambda} \right)^{k'}  \\
 = & \left( \frac{\lambda ^2 s^2 + 2 \lambda s + 1 - \epsilon \lambda^2}{\lambda}\right)^{k'}  \\
 = & (\lambda s^2 + 2s + \lambda^{-1} - \epsilon \lambda)^{k'} \\
 = & (\lambda (s^2 - \epsilon) + 2s + \lambda^{-1})^{k'}.
\end{align*}
Therefore, we have
\begin{align*}
 & \text{constant term of } (\lambda (s^2 - \epsilon) + 2s + \lambda^{-1})^{k'} \equiv\\
 & \sum_{i=0} ^{\frac{k' - 1}{2}} \frac{k'!}{i! i! (k' - 2i)!}(2s)^{k' - 2i}(s^2 - \epsilon)^i \pod\ell\quad \text{for $k'$ odd}
\end{align*}
Thus,
\begin{align*}
& \sum_{s=1} ^{\ell-1} \alpha_s \sum_{\lambda=1}^{\ell-1} \left(\frac{(\lambda s+ 1)^2 - \epsilon \lambda^2}{\lambda} \right)^{-k} \equiv       & \sum_{s=1} ^{\ell-1} \alpha_s \sum_{i=0}^{\myfloor{\frac{k'}{2}}}\frac{k' !}{i! i! (k' - 2i)!}(2s)^{k' - 2i}(s^2 - \epsilon)^i \pod{\ell}.
\end{align*}

If $\alpha_s \equiv 1\pod\ell$ for $s = 1, \ldots, \ell-1$, then for $k' > 0$ odd, we have that
\begin{align*}
& \sum_{s=1} ^{\ell-1} \alpha_s \sum_{i=0}^{\myfloor{\frac{k'}{2}}} \frac{k' !}{i! i! (k' - 2i)!}(2s)^{k' - 2i}(s^2 - \epsilon)^i \\
\equiv &  \sum_{s=1} ^{\ell-1} \sum_{i=0}^{\frac{k'-1}{2}} \frac{k' !}{i! i! (k' - 2i)!}(2s)^{k' - 2i}(s^2 - \epsilon)^i \\
\equiv &  \sum_{i=0}^{\frac{k'-1}{2}} \sum_{s=1} ^{\ell-1} \frac{k' !}{i! i! (k' - 2i)!}(2s)^{k' - 2i}(s^2 - \epsilon)^i \\
\equiv & 0 \pod{\ell}.
\end{align*}

The last equality holds because there are no powers of $s$ whose exponent is divisible by $\ell-1$, which proves the lemma.
\end{proof}

\begin{lem}
\label{lemC}
Let $\beta_s \equiv s^{-1} \pod \ell$ for $s\in \f_\ell^{\times}$. Then, the sum
\begin{align}
 \sum_{s=1} ^{\ell-1} \beta_s \sum_{\lambda=1}^{\ell-1} \left(\frac{\lambda}{(\lambda s + 1)^2 - \epsilon \lambda ^2} \right)^{k},
\end{align}
is zero modulo $\ell$ for $k$ even.
\end{lem}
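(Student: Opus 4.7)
The strategy mirrors that of Lemma~\ref{lemA}, with the weight $\alpha_s \equiv 1$ replaced by $\beta_s \equiv s^{-1} \pod \ell$. The plan is to handle the case $k=0$ separately, then reduce the $k>0$ even case to a sum of power sums $\sum_{s=1}^{\ell-1} s^e \pod \ell$ and observe that the exponent $e$ that appears is always odd, which forces vanishing.

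For $k=0$, the inner $\lambda$-sum equals $\ell-1$, so the total reduces to $(\ell-1)\sum_{s=1}^{\ell-1}s^{-1}$. Since inversion permutes $\f_\ell^\times$, one has $\sum_{s=1}^{\ell-1} s^{-1} \equiv \sum_{s=1}^{\ell-1} s = \ell(\ell-1)/2 \equiv 0 \pod \ell$ (as $\ell$ is odd), giving $0$.

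For $k>0$ even, rewrite the summand as $\left(\frac{(\lambda s+1)^2-\epsilon\lambda^2}{\lambda}\right)^{-k}$ and choose $k' \in \{1,\ldots,\ell-2\}$ with $k \equiv -k' \pod{\ell-1}$. Since both $k$ and $\ell-1$ are even, $k'$ is even. As in Lemma~\ref{lemA}, only the constant term in $\lambda$ of $(\lambda(s^2-\epsilon)+2s+\lambda^{-1})^{k'}$ survives the sum over $\lambda = 1,\ldots,\ell-1$, reducing the problem to showing
\[
 \sum_{s=1}^{\ell-1} s^{-1} \sum_{i=0}^{k'/2} \frac{k'!}{i!\, i!\, (k'-2i)!}\, (2s)^{k'-2i}(s^2-\epsilon)^i \equiv 0 \pod \ell.
\]
Expanding $(s^2-\epsilon)^i = \sum_{j=0}^i \binom{i}{j} s^{2j}(-\epsilon)^{i-j}$ and interchanging summations, the $s$-sum becomes $\sum_{s=1}^{\ell-1} s^{k'-2i-1+2j}$. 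The exponent $e = k'-2i-1+2j$ is odd (since $k'$, $2i$, $2j$ are all even), and lies in the range $[-1, k'-1] \subseteq [-1, \ell-3]$. An odd integer cannot be a positive multiple of $\ell-1$ (which is even); $e$ cannot be $0$ by parity; and for $e = -1$ the sum is again $\sum_{s=1}^{\ell-1} s^{-1} \equiv 0 \pod \ell$. Hence every term vanishes modulo $\ell$.

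The main obstacle is the careful parity bookkeeping. The key conceptual point is that inserting the weight $\beta_s \equiv s^{-1}$ shifts the exponent of $s$ appearing inside the $s$-sum by one, flipping its parity relative to Lemma~\ref{lemA}. Where Lemma~\ref{lemA} had an even exponent and a single non-vanishing contribution precisely at $i = k'/2$, here the exponent is odd and no such surviving term exists, which is exactly what makes the total sum vanish.
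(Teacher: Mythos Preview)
Your proof is correct and follows essentially the same route as the paper's own argument: the $k=0$ case is handled by a direct computation of $(\ell-1)\sum_{s} s^{-1}$, and for $k>0$ even you pass to $k'=\ell-1-k$ even, reduce to the constant term in $\lambda$ of $(\lambda(s^2-\epsilon)+2s+\lambda^{-1})^{k'}$, and then argue that no surviving power of $s$ has exponent divisible by $\ell-1$. The only difference is that the paper simply asserts this last point, whereas you expand $(s^2-\epsilon)^i$ binomially and verify it explicitly via the parity observation that the exponent $e=k'-2i-1+2j$ is always odd; this is a welcome clarification rather than a different method.
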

\begin{proof}
In the case $k = 0$, we cannot use a binomial expansion so we perform a direct computation:
\begin{align}
\label{casezero}
 & \sum_{s=1} ^{\ell-1} \beta_s \sum_{\lambda=1}^{\ell-1} \left(\frac{\lambda}{(\lambda s + 1)^2 - \epsilon \lambda ^2} \right)^{k} \equiv \sum_{s=1}^{\ell-1} \beta_s (\ell-1) \equiv \sum_{s=1}^{\ell-1} s^{-1} (\ell-1)  \notag \\
\equiv & \sum_{s=1}^{\ell-1} s^{\ell-2}(\ell-1) \equiv (\ell-1)\sum_{s=1}^{\ell-1} s^{\ell-2} \equiv 0 \pod{\ell}.
\end{align}
For $k' > 0$ even, we have that
\begin{align*}
& \sum_{s=1} ^{\ell-1} \beta_s \sum_{i=0}^{\myfloor{\frac{k'}{2}}} \frac{k' !}{i! i! (k' - 2i)!}(2s)^{k' - 2i}(s^2 - \epsilon)^i \\
\equiv &\sum_{s=1} ^{\ell-1} s^{-1} \sum_{i=0}^{\frac{k'}{2}} \frac{k' !}{i! i! (k' - 2i)!}(2s)^{k' - 2i}(s^2 - \epsilon)^i \pod{\ell} \\
\equiv & \sum_{i=0}^{\frac{k'}{2}} \sum_{s=1} ^{\ell-1} s^{-1} \frac{k' !}{i! i! (k' - 2i)!}(2s)^{k' - 2i}(s^2 - \epsilon)^i \pod{\ell} \equiv 0 \pod{\ell}.
\end{align*}

The last equality holds because there are no powers of $s$ whose exponent is divisible by $\ell-1$, which proves the lemma.
\end{proof}

\begin{lem}
\label{lemD}
Let $\beta_s \equiv s^{-1} \pod \ell$ for $s\in \f_\ell^{\times}$. Then the sum
\begin{align}
 \sum_{s=1} ^{\ell-1} \beta_s \sum_{\lambda=1}^{\ell-1} \left(\frac{\lambda}{(\lambda s + 1)^2 - \epsilon \lambda ^2} \right)^{k}
\end{align}
is non-zero modulo $\ell$ for $k$ odd.
\end{lem}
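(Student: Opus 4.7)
The plan is to follow the exact pattern of Lemmas~\ref{lemB} and \ref{lemC}, but this time track which monomials in $s$ survive after pairing with $\beta_s \equiv s^{-1}$ and summing over $s \in \f_\ell^\times$. First, I would write $k \equiv -k' \pmod{\ell-1}$ with $1 \le k' \le \ell-2$; since $k$ is odd and $\ell-1$ is even, $k'$ is also odd. As in the earlier lemmas, the inner sum over $\lambda$ reduces modulo $\ell$ to extracting the constant term in $\lambda$ of $(\lambda(s^2-\epsilon) + 2s + \lambda^{-1})^{k'}$, which for $k'$ odd is
\[
C(s) \;=\; \sum_{i=0}^{(k'-1)/2} \frac{k'!}{i!\,i!\,(k'-2i)!}\,(2s)^{k'-2i}(s^2-\epsilon)^{i}.
\]

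Next I would expand $(s^2-\epsilon)^{i} = \sum_{j=0}^{i} \binom{i}{j} s^{2j}(-\epsilon)^{i-j}$ and multiply by $\beta_s \equiv s^{-1}$. The whole double sum in the lemma then becomes a $\f_\ell$-linear combination of the elementary sums $\sum_{s=1}^{\ell-1} s^{k'-2i+2j-1}$. Recall that $\sum_{s \in \f_\ell^\times} s^{n} \equiv -1 \pmod{\ell}$ when $(\ell-1) \mid n$ (and $n \ge 0$), and is $\equiv 0$ otherwise.

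The crucial step is to determine which pairs $(i,j)$ contribute. Because $0 \le j \le i \le (k'-1)/2$ and $1 \le k' \le \ell-2$, the exponent $k'-2i+2j-1$ lies in $[0,\,k'-1] \subseteq [0,\,\ell-3]$, so the only way it can be divisible by $\ell-1$ is for it to equal $0$. That forces $k'-2i+2j = 1$, and together with the constraints $0 \le j \le i \le (k'-1)/2$ this uniquely pins down $i = (k'-1)/2$ and $j = 0$. The entire sum therefore collapses to the single contribution
\[
(-1)\cdot\frac{k'!}{\left(\tfrac{k'-1}{2}\right)!\,\left(\tfrac{k'-1}{2}\right)!\,1!}\cdot 2 \cdot \binom{(k'-1)/2}{0}\cdot(-\epsilon)^{(k'-1)/2} \pmod{\ell},
\]
and this is non-zero modulo $\ell$ because $k' < \ell$ forces every factorial in the denominator to be a unit in $\f_\ell$, while $\epsilon \in \f_\ell^\times$.

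The main obstacle is the bookkeeping required in the third step: verifying that the exponent range $[0, k'-1]$ admits only $0$ as a multiple of $\ell-1$, and that the constraint $k'-2i+2j = 1$ together with $j \le i$ really does single out exactly one $(i,j)$. Once this combinatorial uniqueness is in place, the rest of the proof is a direct reuse of the constant-term expansion developed in the proofs of Lemmas~\ref{lemB} and \ref{lemC}; the essential contrast with the $\alpha_s$ case is that the shift by $s^{-1}$ moves the ``resonant'' exponent from the even-$k'$ case (where it is achieved by $i = k'/2$, $j = i$) to the odd-$k'$ case (where it is achieved by $i = (k'-1)/2$, $j = 0$).
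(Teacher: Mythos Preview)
Your proof is correct and follows essentially the same approach as the paper's: the paper also reduces to the constant term in $\lambda$, multiplies by $s^{-1}$, and asserts that the only surviving power of $s$ comes from $i=(k'-1)/2$, arriving at (a unit multiple of) $\epsilon^{(k'-1)/2}\frac{k'!}{\left(\frac{k'-1}{2}\right)!\left(\frac{k'-1}{2}\right)!}$; you simply make the binomial expansion of $(s^2-\epsilon)^i$ and the pinning down of $(i,j)$ explicit. One inconsequential slip in your closing parenthetical: in the even-$k'$ $\alpha_s$ case the resonant pair is also $i=k'/2$, $j=0$ (not $j=i$), since the exponent $k'-2i+2j$ must vanish.
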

\begin{proof}
For $k' > 0$ odd, we have that
\begin{align*}
& \sum_{s=1} ^{\ell-1} \beta_s \sum_{i=0}^{\myfloor{\frac{k'}{2}}} \frac{k' !}{i! i! (k' - 2i)!}(2s)^{k' - 2i}(s^2 - \epsilon)^i \\
\equiv &\sum_{s=1} ^{\ell-1} s^{-1} \sum_{i=0}^{\frac{k'-1}{2}} \frac{k' !}{i! i! (k' - 2i)!}(2s)^{k' - 2i}(s^2 - \epsilon)^i  \pod{\ell} \\
\equiv &\sum_{i=0}^{\frac{k'-1}{2}} \sum_{s=1} ^{\ell-1} s^{-1} \frac{k' !}{i! i! (k' - 2i)!}(2s)^{k' - 2i}(s^2 - \epsilon)^i \pod{\ell} \\
\equiv & \epsilon^{\frac{k'-1}{2}} \frac{k'!}{\frac{k'-1}{2}!\frac{k'-1}{2}!} \not\equiv 0 \pod{\ell}.
\end{align*}

The last equality holds because the only power of $s$ whose exponent is divisible by $\ell-1$ happens when $i = \frac{k'-1}{2}$, which proves the lemma.
\end{proof}

\begin{cor}
The operator $\sum_{s=1} ^{\ell-1} (\alpha_s+\beta_s) H_s$ has non-zero eigenvalue modulo $\ell$ for all $k > 0$ in its circulant determinant formula.
\end{cor}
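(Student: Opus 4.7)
\emph{Plan.} I would prove the corollary by directly combining Lemmas~\ref{lemA}--\ref{lemD}, using the fact that eigenvalues of circulant matrices combine $\q$-linearly under linear combinations.

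First, by Lemma~\ref{lemma29} each operator $H_s$, restricted to $\q[S]$, has an $(\ell-1)\times(\ell-1)$ circulant matrix, so any $\q$-linear combination $\sum_{s=1}^{\ell-1}(\alpha_s+\beta_s)H_s$ is again circulant of the same size. The explicit simultaneous diagonalization in the proof of Proposition~\ref{determinantofcirculant} shows that all circulants of a fixed size share the common eigenvectors $\gamma_k$, so their eigenvalues combine linearly in the coefficients. Consequently, modulo the prime $\vartheta$ of $\z[\omega]$ above $\ell$, the $k$-th eigenvalue of $\sum_{s=1}^{\ell-1}(\alpha_s+\beta_s)H_s$ equals, up to the overall sign factor $(a-b)^k = \pm 1$ appearing in~\eqref{proof-of-thm-2},
\[
\sum_{s=1}^{\ell-1}\alpha_s \sum_{\lambda=1}^{\ell-1}\left(\frac{\lambda}{(\lambda s+1)^2-\epsilon\lambda^2}\right)^{\!k}\;+\;\sum_{s=1}^{\ell-1}\beta_s \sum_{\lambda=1}^{\ell-1}\left(\frac{\lambda}{(\lambda s+1)^2-\epsilon\lambda^2}\right)^{\!k},
\]
which is exactly the pair of expressions that Lemmas~\ref{lemA}--\ref{lemD} analyze.

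Next, I would split into cases according to the parity of $k$. For $k>0$ even, Lemma~\ref{lemA} shows the $\alpha$-weighted sum is non-zero modulo $\ell$, while Lemma~\ref{lemC} shows the $\beta$-weighted sum vanishes modulo $\ell$; the total is therefore non-zero modulo $\ell$. For $k$ odd, Lemma~\ref{lemB} shows the $\alpha$-weighted sum vanishes modulo $\ell$, while Lemma~\ref{lemD} shows the $\beta$-weighted sum is non-zero modulo $\ell$; again the total is non-zero modulo $\ell$. Either way the $k$-th eigenvalue is non-zero modulo $\vartheta$, hence non-zero modulo $\ell$.

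The main obstacle, namely the binomial-expansion isolation of the surviving constant term together with the parity bookkeeping (and the additional factor $s^{-1}$ in the $\beta_s$ case that swaps which parities vanish), is already absorbed into Lemmas~\ref{lemA}--\ref{lemD}. The corollary itself is then a clean parity-based assembly: the choice $\alpha_s+\beta_s$ is designed so that on \emph{every} parity class of $k$ one of the two contributions is non-zero while the other vanishes, preventing any cancellation. The only residual check is the linearity of eigenvalues for circulants of a common size, which is immediate from Proposition~\ref{determinantofcirculant}.
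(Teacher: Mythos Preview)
Your proposal is correct and follows essentially the same approach as the paper: use linearity of the circulant eigenvalues to write the $k$-th eigenvalue of $\sum_s(\alpha_s+\beta_s)H_s$ as the sum of the $\alpha$- and $\beta$-weighted pieces, then invoke Lemmas~\ref{lemA}--\ref{lemD} with the parity split so that on each parity one summand vanishes and the other is non-zero modulo $\ell$. Your version is slightly more explicit about the parity bookkeeping and the shared-eigenvector justification than the paper's proof, but the argument is the same.
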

\begin{proof}

Using \eqref{casezero}, the operator $\sum_{s=1} ^{\ell-1} (\alpha_s + \beta_s) H_s$ has non-zero eigenvalue for $k = 0$. Furthermore, by Lemmas~\ref{lemA}, ~\ref{lemB},~\ref{lemC}, and ~\ref{lemD}, the eigenvalue of $\sum_{s=1} ^{\ell-1}(\alpha_s + \beta_s) H_s$ is non-zero modulo $\vartheta$ for $k > 0$, since the eigenvalue of $\sum_{s=1} ^{\ell-1} (\alpha_s + \beta_s) H_s$ for $k$ is the sum of the eigenvalues for $k$ of $\sum_{s=1} ^{\ell-1} \alpha_s H_s$ and $\sum_{s=1} ^{\ell-1} \beta_s H_s$.
\end{proof}
The above corollary shows that determinant of $\sum_{s=1} ^{\ell-1} (\alpha_s + \beta_s) H_s$ is non-zero modulo $\ell$, and is hence non-zero. This concludes the proof of Theorem~\ref{main-C}.

\section{Relations between Jacobians of certain modular curves}
\label{lastchapter}

In this section, we summarize some applications of the main results
of this paper to Jacobians of modular curves.

Let $X = X(\ell)$ denote the modular curve of full level $\ell$ structure
which has the structure of a projective algebraic curve over $\q$
for $p \ge 3$ (cf.\ \cite[p.241]{mazur-wiles} or \cite{katz-mazur}).

The group $G = \GL_2(\f_\ell)$ acts on $X$ and the quotients $X_H :=
X/H$ by subgroups $H$ of $G$ (which contain $-1$) exist as
projective algebraic curves over $\q$ \cite[p.244]{mazur-wiles} and \cite{katz-mazur}.

Let $J$ denote the Jacobian of $X$ and $J_H$ denote the Jacobian of
$X_H$.

\begin{prop}
\label{induce-Jacobians} Let $\sigma : \z[G/H'] \rightarrow \Z[G/H]$
be a $\z[G]$-module homomorphism. Then $\sigma$ induces a
homomorphism of Jacobians $\sigma^* : J_H \rightarrow J_{H'}$.
\end{prop}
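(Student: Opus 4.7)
The plan is to construct $\sigma^*$ in two stages: first for a single double coset operator, then extend $\z$-linearly to arbitrary $\z[G]$-module homomorphisms. A $\z[G]$-homomorphism $\z[G/H'] \to \z[G/H]$ is determined by the image of the coset $H'$, which must be $H'$-invariant, and the $H'$-invariants in $\z[G/H]$ are spanned, as a $\z$-module, by the sums over $H'$-orbits on $G/H$, i.e., by the double cosets in $H' \backslash G / H$. Thus, it suffices to define $\sigma^*$ when $\sigma = \sigma(H' g H)$ and extend linearly.

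For a fixed $g \in G$, I would set $K := H \cap g^{-1} H' g$, so that $K \subseteq H$ and $g K g^{-1} = g H g^{-1} \cap H' \subseteq H'$. Since all four subgroups contain $-1$, the curves $X_K$ and $X_{g K g^{-1}}$ exist as projective algebraic curves over $\q$, and we obtain the diagram
\[
X_H \xleftarrow{\pi_1} X_K \xrightarrow{[g]} X_{g K g^{-1}} \xrightarrow{\pi_2} X_{H'},
\]
where $\pi_1, \pi_2$ are the natural quotient coverings and $[g]$ is the isomorphism induced by the $G$-action of $g$ on $X$. I then define
\[
\sigma^* := (\pi_2)_* \circ [g]_* \circ \pi_1^* : J_H \longrightarrow J_{H'},
\]
using Picard pullback for $\pi_1^*$ and Picard pushforward (Albanese trace) for $(\pi_2)_*$. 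Since each morphism in the diagram is defined over $\q$, so is $\sigma^*$.

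Next I would check that $\sigma^*$ depends only on the double coset $H' g H$ and not on the representative $g$: replacing $g$ by $h' g h$ with $h' \in H'$ and $h \in H$ conjugates $K$ to $h^{-1} K h$, and the modified diagram is intertwined by the actions of $h$ on $X_K$ and $h'$ on $X_{g K g^{-1}}$, each of which descends to the identity on $X_H$ and $X_{H'}$ respectively. To confirm the naming $\sigma \mapsto \sigma^*$ is compatible with the coset decomposition $H' g H = \bigsqcup_{\alpha} \alpha g H$ of Lemma~\ref{doublecoset}, I would compare the induced map on the group of divisors supported on a generic $G$-orbit, where $\pi_1^*$ becomes summation over a fiber and $(\pi_2)_* \circ [g]_*$ reassembles these fibers into $H'$-orbits precisely as in~\eqref{doublecoset-defn}.

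The main obstacle will be bookkeeping: the directions of $\sigma : \z[G/H'] \to \z[G/H]$ and $\sigma^* : J_H \to J_{H'}$ reverse, and pullback/pushforward swap orientation in a way that is easy to confuse. A cleaner conceptual cross-check is to identify $\sigma(H' g H)$ with the correspondence $Z \subseteq X_H \times X_{H'}$ cut out by the image of $X_K$ under $(\pi_1,\ \pi_2 \circ [g])$, and then invoke the standard fact that every correspondence on a product of smooth projective curves over $\q$ induces a $\q$-homomorphism between their Jacobians; this viewpoint also makes $\z$-linearity in $\sigma$ transparent, since the set of correspondences forms a group under formal sum and this operation is compatible with addition of induced Jacobian morphisms.
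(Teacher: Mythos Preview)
The paper does not supply its own argument here; it simply cites \cite[Lemma~3.3]{chen}. Your construction---decompose an arbitrary $\z[G]$-homomorphism uniquely as a $\z$-linear combination of double coset operators using the identification $\Hom_{\z[G]}(\z[G/H'],\z[G/H]) \cong \z[G/H]^{H'} \cong \z[H'\backslash G/H]$, then realise each $\sigma(H'gH)$ as the Hecke correspondence $X_H \xleftarrow{\pi_1} X_K \xrightarrow{[g]} X_{gKg^{-1}} \xrightarrow{\pi_2} X_{H'}$ and set $\sigma^* = (\pi_2)_*\circ[g]_*\circ\pi_1^*$---is exactly the standard construction and is correct. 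The only point worth flagging is your observation that the double coset operators form a $\z$-\emph{basis} (not merely a spanning set) of the Hom-module, which is what makes the $\z$-linear extension automatically well-defined without a separate compatibility check; you state this but it deserves emphasis, since otherwise one would need to verify that relations among double coset operators are preserved under $\sigma\mapsto\sigma^*$.
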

\begin{proof}
This is proved in \cite[Lemma 3.3]{chen}.
\end{proof}

\begin{prop}
\label{induce-relation} Suppose a cochain complex of $\z[G]$-modules
\begin{equation*}
 \ldots \longrightarrow \z[G/H_{i-1}] \longrightarrow \z[G/H_i] \longrightarrow
 \z[G/H_{i+1}] \longrightarrow \ldots
\end{equation*}
has finite cohomology groups. Then the induced sequence of Jacobians
by applying Proposition~\ref{induce-Jacobians} yields a chain
complex
\begin{equation*}
  \ldots \longleftarrow J_{H_{i-1}} \longleftarrow J_{H_i} \longleftarrow
  J_{H_{i+1}} \longleftarrow \ldots
\end{equation*}
with finite homology groups.
\end{prop}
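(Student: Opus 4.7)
The plan is to reduce the statement to a representation-theoretic exactness assertion by passing to rational $p$-adic Tate modules for a prime $p$ coprime to $|G|$, where $\q_p[G]$ is semisimple. First, I apply Proposition~\ref{induce-Jacobians} termwise to the maps in the given cochain complex to produce morphisms $J_{H_{i+1}} \to J_{H_i}$; by the functoriality of the construction in \cite[Lemma 3.3]{chen}, compositions of consecutive maps vanish on Jacobians whenever they vanish at the level of $\z[G]$-modules, so the induced sequence is indeed a chain complex.

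Next, I use the standard dictionary between chain complexes of abelian varieties over $\q$ and their rational Tate modules: each homology group of the complex $J_{H_\bullet}$ is a finite group scheme if and only if the induced complex of rational Tate modules $V_p(J_{H_i}) := T_p(J_{H_i}) \otimes_{\z_p} \q_p$ is exact at the corresponding position, for any prime $p$. This relies on $V_p(\im f) = \im(V_p f)$ and $V_p(\ker(f)^0) = \ker(V_p f)$ for morphisms of abelian varieties, together with the fact that a group subscheme of an abelian variety is finite iff its rational Tate module vanishes. Now choose $p$ coprime to $|G| = |\GL_2(\f_\ell)|$, so that $\q_p[G]$ is semisimple by Maschke's theorem. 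For any subgroup $H \le G$ containing $-1$, the quotient $X \to X_H$ gives
\begin{equation*}
V_p(J_H) \;\cong\; V_p(J)^H \;\cong\; \Hom_{\q_p[G]}\bigl(\q_p[G/H],\, V_p(J)\bigr),
\end{equation*}
and a $\z[G]$-homomorphism $\sigma: \z[G/H'] \to \z[G/H]$ induces on Tate modules precisely precomposition with $\sigma \otimes \q_p$. Hence the induced complex of rational Tate modules is the image of the tensored cochain complex $\q_p[G/H_\bullet]$ under the functor $F := \Hom_{\q_p[G]}(-, V_p(J))$.

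Since the original cochain complex has finite cohomology, its tensor with $\q_p$ is an exact complex of $\q_p[G]$-modules; applying $F$, which is exact by semisimplicity of $\q_p[G]$, yields an exact complex of $\q_p$-vector spaces, and this translates to finite homology of the Jacobian chain complex via the dictionary above. The main obstacle is setting up the Tate-module dictionary carefully, in particular ensuring that exactness of rational Tate modules implies actual finiteness (rather than merely isogeny-triviality) of the homology group schemes; alternatively, one could work directly in Poincar\'e's semisimple $\q$-linear isogeny category of abelian varieties and verify that $\q[G/H] \mapsto J_H \otimes \q$ extends to an exact contravariant functor from finitely generated $\q$-permutation representations of $G$, which makes the argument purely categorical.
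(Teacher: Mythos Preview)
The paper does not argue this proposition at all; its proof reads in full ``This is proved in \cite[Proposition 3.7]{chen}.'' Your Tate-module argument is correct and is the standard route, and is presumably close in spirit to what the cited reference does: identify $V_p(J_H)$ with $\Hom_{\q_p[G]}(\q_p[G/H],V_p(J))$, transport the cochain complex through this contravariant $\Hom$, and use semisimplicity. Two small comments. First, the hypothesis that $p$ be coprime to $|G|$ is superfluous: $\q_p$ has characteristic zero, so $\q_p[G]$ is semisimple for every $p$, and the identification $V_p(J_H)\cong V_p(J)^H$ holds for all $p$ since you work with rational Tate modules. Second, the one point where you genuinely lean on \cite{chen} is the compatibility that, under $V_p(J_H)\cong \Hom_{\q_p[G]}(\q_p[G/H],V_p(J))$, the induced map $\sigma^*$ becomes precomposition with $\sigma\otimes\q_p$; this (together with additivity, so that the zero $\z[G]$-map induces the zero morphism of Jacobians) is precisely the content one extracts from \cite[Lemma~3.3]{chen}, so your appeal there is well placed. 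Your closing alternative via the isogeny category of abelian varieties is equally valid and arguably cleaner, since it avoids choosing a prime $p$ altogether.
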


\begin{proof}
This is proved in \cite[Proposition 3.7]{chen}.
\end{proof}

Theorems~\ref{main-N} and \ref{main-C} imply that
\begin{align}
\label{Q-rel-1} \q[G/N] \longrightarrow_{\psi^+} \q[G/N'] \longrightarrow 0 \\
\label{Q-rel-2} \q[G/C] \longrightarrow_{\psi} \q[G/C']
\longrightarrow 0
\end{align}
are exact cochain complexes of $\q[G]$-modules.

\begin{prop} The following are cochain complexes
\begin{align}
   \z[G/N] \longrightarrow_{\psi^+} \z[G/N'] \longrightarrow 0 \\
   \z[G/C] \longrightarrow_{\psi} \z[G/C'] \longrightarrow 0
\end{align}
with finite cohomology groups.
\end{prop}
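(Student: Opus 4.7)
The plan is to prove two things separately: first, that the displayed sequences are bona fide cochain complexes of $\z[G]$-modules, and second, that their cohomology groups are finite abelian groups.

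For the cochain structure, I would invoke Lemma~\ref{doublecosetoperatorN}, which identifies $\psi^+$ with the double coset operator $NN'$ and hence a $\z[G]$-module homomorphism $\z[G/N]\to\z[G/N']$. Similarly Lemma~\ref{doublecosetoperatorC} identifies each $H_s$ with the double coset operator $C\begin{pmatrix}1 & s \\ 0 & 1\end{pmatrix}C'$, and since the coefficients $\alpha_s,\beta_s$ are integers in $\{0,\ldots,\ell-1\}$ by the definition given in Theorem~\ref{main-C}, the $\z$-linear combination $\psi=\sum_{s=1}^{\ell-1}(\alpha_s+\beta_s)H_s$ is also a $\z[G]$-module homomorphism $\z[G/C]\to\z[G/C']$. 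The composition with the zero map on the right is trivially zero, so both sequences are genuine cochain complexes of $\z[G]$-modules.

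For the finiteness of cohomology, the reduction is to finiteness of the cokernels $\z[G/N']/\im(\psi^+)$ and $\z[G/C']/\im(\psi)$. Because $\z[G/N']$ and $\z[G/C']$ are free $\z$-modules of finite rank, these cokernels are finitely generated abelian groups. By Theorems~\ref{main-N} and \ref{main-C}, the maps $\psi^+$ and $\psi$ are surjective after tensoring with $\q$, so the cokernels satisfy $\coker(\psi^+)\otimes_\z\q=0$ and $\coker(\psi)\otimes_\z\q=0$, hence are torsion. A finitely generated torsion abelian group is finite, which yields the claim for the right-hand cohomology.

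I do not anticipate a serious technical obstacle, as all the hard work of proving rational surjectivity has been done in Theorems~\ref{main-N} and \ref{main-C} via the circulant-determinant computation. The remaining step, namely passing from surjectivity of $\psi^+\otimes\q$ and $\psi\otimes\q$ to finiteness of the integral cokernels, is a direct application of the structure theorem for finitely generated abelian groups. The only book-keeping point requiring care is the integrality of $\alpha_s$ and $\beta_s$, ensuring that $\psi$ is a $\z[G]$-module homomorphism rather than merely a $\q[G]$-module homomorphism; this is built into the formulation of Theorem~\ref{main-C} and needs no further argument.
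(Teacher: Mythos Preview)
Your proposal is correct and follows essentially the same approach as the paper: tensor with $\q$, invoke Theorems~\ref{main-N} and \ref{main-C} for rational surjectivity, and conclude that the integral cokernels are finitely generated torsion groups, hence finite. The paper's own proof is a two-line version of exactly this argument; you have simply unpacked the details (integrality of the coefficients $\alpha_s,\beta_s$, the structure theorem for finitely generated abelian groups) that the paper leaves implicit.
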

\begin{proof}
This follows from tensoring the cochain complexes above by $\q$. If
the cohomology groups were not finite, this would contradict the
exactness of the cochain complexes in
\eqref{Q-rel-1}-\eqref{Q-rel-2}.
\end{proof}

Applying Proposition~\ref{induce-relation}, we obtain:
\begin{cor}
\label{main-cor}
The following are chain complexes
\begin{align}
  0 \longrightarrow J_{N'} \longrightarrow_{\psi^{+*}} J_N \\
  0 \longrightarrow J_{C'} \longrightarrow_{\psi^*} J_C
\end{align}
with finite homology groups.
\end{cor}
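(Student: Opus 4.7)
The proof is a direct two-step application of the two propositions stated just before the corollary. First, from the preceding proposition we have that
\begin{equation*}
\z[G/N] \longrightarrow_{\psi^+} \z[G/N'] \longrightarrow 0, \qquad \z[G/C] \longrightarrow_{\psi} \z[G/C'] \longrightarrow 0
\end{equation*}
are cochain complexes of $\z[G]$-modules with finite cohomology groups, which are exactly the hypotheses needed to apply Proposition~\ref{induce-relation}. So my plan is simply to quote the preceding proposition to supply the input data.

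Second, I would invoke Proposition~\ref{induce-relation}, combined with the functoriality of Proposition~\ref{induce-Jacobians}, for each of the two cochain complexes in turn. Because the Jacobian functor is contravariant, the direction of the arrows is reversed and the terminal $0$ of the cochain complex becomes an initial $0$ of the chain complex; the conclusion of Proposition~\ref{induce-relation} then yields exactly
\begin{equation*}
0 \longrightarrow J_{N'} \longrightarrow_{\psi^{+*}} J_N, \qquad 0 \longrightarrow J_{C'} \longrightarrow_{\psi^*} J_C,
\end{equation*}
with finite homology groups, which is the statement of the corollary.

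There is essentially no obstacle in this argument; it is a packaging corollary. The substantive work---constructing $\psi^+$ and $\psi$ as $\q[G]$-module surjections, promoting the corresponding sequences to cochain complexes of $\z$-lattices whose cohomology is finite, and transporting module-theoretic data into an assertion about Jacobians---has already been carried out in Theorems~\ref{main-N} and \ref{main-C}, in the proposition immediately preceding the corollary, and in the cited results of \cite{chen}. The only conceptual point worth flagging to the reader is the contravariance of the Jacobian functor, which accounts for both the reversal of the arrows and the appearance of $J_{N'}, J_{C'}$ as the leftmost (rather than rightmost) nonzero terms of the induced chain complexes.
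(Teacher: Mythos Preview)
Your proposal is correct and follows exactly the paper's approach: the paper proves this corollary with the single phrase ``Applying Proposition~\ref{induce-relation}, we obtain,'' and your argument is a slightly more verbose unpacking of precisely that step, feeding the preceding proposition's cochain complexes with finite cohomology into Proposition~\ref{induce-relation}. The extra remarks on contravariance are accurate and helpful but not required.
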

From \cite{chen-98}, we have that
\begin{align}
    J_N \sim J_{N'} \times J_B \\
    J_C \sim J_{C'} \times J_B^2,
\end{align}
where $\sim$ denotes the relation of isogeny over $\q$, and $B$ is the subgroup of upper triangular matrices in $G$. Hence, Corollary~\ref{main-cor} describes the main part of the well-known relations between $J_N$ and $J_{N'}$ (resp.\ $J_C$ and $J_{C'}$) using explicit correspondences. 

It is known that $X_{C} \cong X_0(\ell^2)$ and $X_{N} \cong X_0(\ell^2)/\left< w_\ell \right>$, which are the more standard modular curves studied in the literature.

\end{document}